\author[P.~Leonetti]{Paolo Leonetti}
\address{Department of Statistics, Universit\`a Bocconi, via Roentgen 1, Milan 20136, Italy} 
\email{leonetti.paolo@gmail.com}
\urladdr{\url{https://sites.google.com/site/leonettipaolo/}}
\author[M.~Caprio]{Michele Caprio}
\address{Department of Statistical Science,
Duke University, 214 Old Chemistry, Durham, NC 27708-0251}
\email{michele.caprio@duke.edu}
\urladdr{\url{https://mc6034.wixsite.com/caprio}} 
\keywords{Turnpike; ideal and statistical convergence; ideal cluster point; optimal stationary point; fixed point of correspondences.}
\subjclass[2020]{Primary: 37C70, 37C99. Secondary: 40A35, 40A05.}
\title{Turnpike in infinite dimension}
\newcommand{\vertiii}[1]{{\left\vert\kern-0.25ex\left\vert\kern-0.25ex\left\vert #1 
    \right\vert\kern-0.25ex\right\vert\kern-0.25ex\right\vert}}
   \def\MR#1{}
\newtheorem{thm}{Theorem}[section]
\newtheorem{cor}[thm]{Corollary}
\newtheorem{lem}[thm]{Lemma}
\newtheorem{prop}[thm]{Proposition}
\theoremstyle{definition} 
\let\olddefi\defi
\renewcommand{\defi}{\olddefi\normalfont}
\newtheorem{example}[thm]{Example}
\let\oldexample\example
\renewcommand{\example}{\oldexample\normalfont}
\newtheorem{rmk}[thm]{Remark}
\let\oldrmk\rmk
\renewcommand{\rmk}{\oldrmk\normalfont}
\theoremstyle{remark}
\newtheorem{claim}{\textsc{Claim}}
\newtheorem*{claim*}{\textsc{Claim}}
\providecommand{\MR}[1]{}
\providecommand{\MR}{\relax\ifhmode\unskip\space\fi MR }
\providecommand{\href}[2]{#2}
\begin{document}

\begin{abstract}
\noindent Let $\Phi$ be a correspondence from a normed vector space $X$ into itself, let $u: X\to \mathbf{R}$ be a function, and $\mathcal{I}$ be an ideal on $\mathbf{N}$. 
Also, assume that the restriction of $u$ on the fixed points of $\Phi$ has a unique maximizer $\eta^\star$. 
Then, we consider feasible paths $(x_0,x_1,\ldots)$ with values in $X$ such that $x_{n+1} \in \Phi(x_n)$ for all $n\ge 0$. Under certain additional conditions, we prove the following turnpike result: every feasible path $(x_0,x_1,\ldots)$ which maximizes the smallest $\mathcal{I}$-cluster point of the sequence $(u(x_0),u(x_1),\ldots)$ is necessarily $\mathcal{I}$-convergent to $\eta^\star$. 

We provide examples that, on the one hand, justify the hypotheses of our result and, on the other hand, prove that we are including new cases which were previously not considered in the related literature. 
\end{abstract}

\maketitle
\thispagestyle{empty}

\section{Introduction}

Let $X$ be a normed real vector space, and fix a correspondence $\Phi$ from $X$ into itself and a functional $u: X\to \mathbf{R}$ which may be interpreted as a utility function. Then, a sequence $\bm{x}=(x_0,x_1,\ldots)$ with values in $X$ is said to be feasible if $x_{n+1}\in\Phi(x_n)$ for all $n\ge 0$. Note that this sequence is simply the orbit of the starting point $x_0$ if $\Phi$ is singleton-valued. Fix also an ideal $\mathcal{I}$ on the nonnegative integers $\mathbf{N}$, which will represent the family of "small" sets (see Section \ref{sec:preparation} for definitions). Assuming that $\bm{x}$ belongs to a given constraint set $\mathscr{C}$ of feasible sequences, we say that $\bm{x}$ is $\mathcal{I}$-optimal if it maximizes the smallest $\mathcal{I}$-cluster point of the real sequence $(u(x_0),u(x_1),\ldots)$; here, an $\mathcal{I}$-cluster point is, informally, an accumulation point which is not small with respect to $\mathcal{I}$. 

Our aim is to study the asymptotic stability of $\mathcal{I}$-optimal paths, which is often referred to as \emph{turnpike property}, see e.g. \cite{MR2164615, MR3966463} for a textbook exposition. 
Roughly, this property states that an $\mathcal{I}$-optimal path spends "most" of the time within a small neighborhood of some optimal stationary point, which is an identified fixed point of $\Phi$. 
Here, following the same lines of \cite{MR3166601, MR1821765, 
MR1772096}, the adjective "most" is intended with respect to the ideal $\mathcal{I}$. In particular, $\mathcal{I}$-optimal paths are  potentially not convergent to the optimal stationary point. 
As remarked in \cite{MR2164615}, the turnpike property has the following interpretation: if one is looking for an optimal way to reach $A$ from $B$ by car, then he should enter onto a turnpike, spend most of the time there, and finally leave the turnpike to reach the claimed point. 
There is an extensive literature which studies this phenomenon, see e.g. \cite{
MR4082481,
MR3217211,
MR3902445,
MR4032435,
MR3334969,
MR1692188
}.

Our main result (Theorem \ref{thm:mainthmfixedpoint}) generalizes the main ones obtained in \cite{MR3166601, MR1821765}. 
We discuss later how our assumptions are related to the ones in these articles. In addition, we show with some novel examples that: 
\begin{enumerate}[label={\rm (\roman{*})}]
\item The turnpike property provided in Theorem \ref{thm:mainthmfixedpoint} does not hold without any restriction on the ideal $\mathcal{I}$ (see Example \ref{example:Itralsation});
\item An $\mathcal{I}$-optimal path may not converge, in the classical sense, to the optimal stationary point (see Example \ref{ex:idealconvergentnotconvergent});
\item The turnpike property holds also in infinite dimension (see Example \ref{ex:infinitedimension}). 
\end{enumerate}
Our main result and its consequences follow in Section \ref{sec:mainresult}.

\subsection{Preparation}\label{sec:preparation} 
An ideal $\mathcal{I}\subseteq \mathcal{P}(\mathbf{N})$ is a family closed under finite union and subsets. 
It is also assumed that $\mathcal{I}$ contains the family of finite sets $\mathrm{Fin}$ and it is different from $\mathcal{P}(\mathbf{N})$. 
Let also 
$\mathcal{I}^\star:=\{S\subseteq \mathbf{N}: S^c\in \mathcal{I}\}$ be its dual filter and
$\mathcal{I}^+:=\{S\subseteq \mathbf{N}: S\notin \mathcal{I}\}$ be the collection of $\mathcal{I}$-positive sets. 
We denote by $\mathcal{Z}$ be the ideal of asymptotic density zero sets, i.e.,
$$
\mathcal{Z}=\left\{A\subseteq \mathbf{N}: |\{a \in A: a\le n\}|=o(n) \text{ as }n\to \infty\right\}.
$$
An ideal $\mathcal{I}$ on $\mathbf{N}$ is said to be \emph{translation invariant} if $(A+k) \cap \mathbf{N} \in \mathcal{I}$ for all $A \in \mathcal{I}$ and all (possibly negative) integers $k \in \mathbf{Z}$. Note that the ideal $\mathcal{Z}$ is translation invariant. 
Classes of translation invariant ideals have been widely studied, see e.g. \cite{MR3863054, MR4074561} and the ideals generated by the upper densities considered in \cite{MR4054777}. 
However, there exist ideals which are not translation invariant: for instance, all the maximal ideals (indeed exactly one between the even and the odd integers belongs to a maximal ideal) and much simpler ones as the family of all sets $A\subseteq \mathbf{N}$ containing finitely many even integers. 


Let $\bm{x}=(x_n)$ be a sequence taking values in a topological vector space $S$. 
Then we say that $\bm{x}$ is $\mathcal{I}$\emph{-convergent} to $\eta \in S$, shortened as $\mathcal{I}\text{-}\lim \bm{x}=\eta$, if $\{n \in \mathbf{N}: x_n \in U\} \in \mathcal{I}^\star$ for all open neighborhoods $U$ of $\eta$. 
Moreover, we say that $\eta \in S$ is an $\mathcal{I}$\emph{-cluster point} of $\bm{x}$ if $\{n \in \mathbf{N}: x_n \in U\} \in \mathcal{I}^+$ for all open neighborhoods $U$ of $\eta$. 
The set of $\mathcal{I}$-cluster points of $\bm{x}$ is denoted by $\Gamma_{\bm{x}}(\mathcal{I})$. 
Usually $\mathcal{Z}$-convergence and $\mathcal{Z}$-cluster points are referred to as \emph{statistical convergence} and \emph{statistical cluster points}, respectively, see e.g. \cite{
MR1181163, MR1260176}. 
Note that $\mathrm{Fin}$-convergence coincides with the ordinary convergence and that $\Gamma_{\bm{x}}(\mathrm{Fin})$ is the set of ordinary accumulation points of $\bm{x}$. 
It is worth noting that $\mathcal{I}$-cluster points have
been studied much before under a different name. 
Indeed, as it follows by \cite[Theorem 4.2]{MR3920799} and \cite[Lemma 2.2]{MR4126774}, they correspond to classical “cluster points” of a filter (depending on $\bm{x}$) on the underlying space, cf. \cite[Definition 2, p.69]{MR1726779}. 

Finally, following \cite{MR1416085}, for each real sequence $\bm{x}$ such that $\{n \in \mathbf{N}: |x_n|\ge M\}\in \mathcal{I}$ for some $M \in\mathbf{R}$, we define its $\mathcal{I}$-limit inferior as 
$$
\mathcal{I}\text{-}\liminf \bm{x}:=\inf\{r \in \mathbf{R}: \{n \in \mathbf{N}: x_n>r\}\in \mathcal{I}^+\}.
$$
Simmetrically, we let $\mathcal{I}\text{-}\limsup \bm{x}:=-\mathcal{I}\text{-}\liminf (-\bm{x})$ be the $\mathcal{I}$-limit superior. 
Again, it is easy to see that if $\mathcal{I}=\mathrm{Fin}$ then they coincide with the ordinary limit inferior and limit superior of $\bm{x}$, respectively. It is remarkable that they can be rewritten also as the smallest and the biggest $\mathcal{I}$-cluster point of $\bm{x}$, respectively, cf. Corollary \ref{lem:representationIliminf} below.

Given sets $A,B$, we say that $\alpha: A\rightrightarrows B$ is a correspondence if $\alpha(x)$ is a (possibly empty) subset of $B$ for each $x \in A$. Moreover, we denote the set of its fixed points by
$$
\mathrm{Fix}(\alpha):=\{x \in A: x \in \alpha(x)\}.
$$
We recall that, if $A$ and $B$ are endowed with some topologies, then 
the correspondence 
$\alpha$ is upper hemicontinuous at $x \in A$ if for each open $U\supseteq \alpha(x)$ there exists an open neighborhood $V$ of $x$ such that $z \in V$ implies $\varphi(z)\subseteq U$. 
Moreover, $\alpha$ is lower hemicontinuous at $x \in A$ if for every open $U\subseteq B$ with $\varphi(x)\cap U\neq \emptyset$ there exists an open neighborhood $V$ of $x$ such that $z \in V$ implies $\varphi(z)\cap U\neq \emptyset$. 
Finally, the correspondence $\alpha$ is said to be \emph{continuous} if it is both upper and lower hemicontinuous at each point $x \in A$, see \cite[Definition 17.2]{MR2378491}. 

Given a function $h: A\to B$ and a sequence $\bm{x}=(x_0,x_1,\ldots)$ with values in $A$, we write $h(\bm{x})$ for the sequence $(h(x_0), h(x_1), \ldots)$. 
Lastly, if $B=\mathbf{R}$, 
we say that $x_0 \in A$ is a \emph{maximizer} of $h$ if $h(x) \le h(x_0)$ for all $x \in A$ (and similarly for minimizers). 




\section{Main result}\label{sec:mainresult}

Let $X$ be a real normed vector space and denote by $\mathcal{K}$ the collection of its nonempty compact subsets. 
Also, let $\mathcal{I}$ be an ideal on the nonnegative integers $\mathbf{N}$,  
and fix a correspondence 
$
\Phi: X\rightrightarrows X
$ 
and a function
$ 
u: X\to \mathbf{R}. 
$ 
In this setting, the function $u$ will take the role of a utility function which induces a total preorder on $X$. 

Let $\mathscr{K}$ be the family of sequences $\bm{x}=(x_0,x_1,\ldots)$ taking values in $X$ which are $\mathcal{I}$-contained in a compact, that is, such that $\{n \in \mathbf{N}: x_n \notin K\}\in \mathcal{I}$ for some  $K\in \mathcal{K}$. 
Moreover, we let $\mathscr{F}$ be the collection of \emph{feasible paths} $\bm{x}$ which satisfy $x_{n+1} \in \Phi(x_n)$ for all $n$, that is, 
$$
\mathscr{F}=\{\bm{x} \in X^{\mathbf{N}}: \forall n \in \mathbf{N}, x_{n+1} \in \Phi(x_n) \}.
$$
It is easy to see that, if $u$ is continuous, then $\mathcal{I}\text{-}\liminf u(\bm{x})$ and $\mathcal{I}\text{-}\limsup u(\bm{x})$ are well defined for each sequence $\bm{x} \in \mathscr{F}_{\mathrm{K}}$, where 
$$
\mathscr{F}_{\mathrm{K}}:=\mathscr{F} \cap \mathscr{K},
$$
cf. Section \ref{sec:preliminaries}. 
Fix also a nonempty subset $\mathscr{C}\subseteq \mathscr{F}_{\mathrm{K}}$, which will take the role of the collection of constraints. 
Note that the primitive elements of this system are represented by the tuple 
$
\langle  X, \Phi, u, \mathcal{I}, \mathscr{C}\rangle.
$ 
Finally, we say that a sequence $\bm{x} \in \mathscr{C}$ is $\bm{\mathcal{I}}$\textbf{-optimal} if  
\begin{equation}\label{eq:definitionoptimal}
\textstyle 
\forall \bm{y} \in \mathscr{C}, \quad 
\mathcal{I}\text{-}\liminf u(\bm{x}) \ge \mathcal{I}\text{-}\liminf u(\bm{y}).
\end{equation}

%
%
In other words, an $\mathcal{I}$-optimal path $\bm{x}$ is a maxmin solution in a precise sense: it maximizes the minimal $\mathcal{I}$-cluster point of the sequence $(u(y_0), u(y_1),\ldots)$ among all feasible paths $\bm{y}$ in the constraint set $\mathscr{C}$, cf. Corollary \ref{lem:representationIliminf} below. 

The aim of this work, in the same spirit of \cite{MR3166601, MR1821765, MR3334969, MR1772096}, is to find sufficient conditions on the system $\langle  X, \Phi, u, \mathcal{I}, \mathscr{C}\rangle$ such that every $\mathcal{I}$-optimal path is necessarily $\mathcal{I}$-convergent to some identified fixed point of $\Phi$ (in this setting, a fixed point of $\Phi$ is usually called \emph{stationary point}).  
We are going to show that certain feasible paths satisfy this property whenever the following conditions on $\langle X, \Phi, u, \mathcal{I}, \mathscr{C}\rangle$ hold: 
\begin{enumerate}[label={\rm (\textsc{A}\arabic{*})}]
\item \label{item:varphicontinuous} $\Phi$ is continuous and takes values in $\mathcal{K}$;

\item \label{item:functioncontinuous} $u$ is continuous; 

\item \label{item:translation} $\mathcal{I}$ is translation invariant;





\item \label{item:restriction} There exists a unique $\eta^\star \in \mathrm{Fix}(\Phi)$ which maximizes the restriction of $u$ on $\mathrm{Fix}(\Phi)$;





\item \label{item:inequality} There exists a continuous linear functional $T: X\to \mathbf{R}$ such that 
\begin{equation}\label{eq:existenceT}
\forall x \in F, \forall y \in \Phi(x), \quad Tx \le Ty \implies x=y=\eta^\star\textup{,}
\end{equation}
where $F:=\{x \in X: u(x) \ge u(\eta^\star)\}$; 

\item \label{item:maximizingsequence} $\sup_{\bm{y} \in \mathscr{C}} \mathcal{I}\text{-}\liminf u(\bm{y}) \ge u(\eta^\star)$.
\end{enumerate} 
We remark that condition \ref{item:varphicontinuous} is equivalent to the fact that the function $X\to \mathcal{K}$ defined by $x\mapsto \Phi(x)$ is continuous with respect to the Hausdorff metric, see \cite[Theorem 17.15]{MR2378491}.

Note that the separation property given in condition \ref{item:inequality} has been already used in \cite{MR3166601, MR1821765} for the case $X=\mathbf{R}^n$, replacing \eqref{eq:existenceT} with the weaker variant
\begin{equation}\label{eq:existenceTweak}
\forall x \in F, \forall y \in \Phi(x), \quad Tx \le Ty \implies x=\eta^\star\textup{.}
\end{equation}
However, a careful analysis of their proofs reveals that, in fact, they were both implicitly using \eqref{eq:existenceT}. 
Indeed, as pointed out by Piotr Szuca in a private communication \cite{MusaSzuca2021}, condition \eqref{eq:existenceTweak} is in fact \emph{not} sufficient for their purposes. 
On this direction, see also Remark \ref{rmk:conditionexistence} below. 
%
%
Condition \eqref{eq:existenceTweak} appeared also in \cite{MR1199717} in the study of turnpike theorems for integral functionals in a continuous time setting. A somehow related condition can be found in \cite[Lemma 4.3]{MR3334969}. 

Our main result follows.
\begin{thm}\label{thm:mainthmfixedpoint}
Let $\langle X, \Phi, u, \mathcal{I}, \mathscr{C}\rangle$ be a system which satisfies conditions \ref{item:varphicontinuous}--\ref{item:maximizingsequence}. 
Let also $\bm{x}\in \mathscr{C}$ be an $\mathcal{I}$-optimal path.  
Then $\mathcal{I}\text{-}\lim \bm{x}=\eta^\star$. 
\end{thm}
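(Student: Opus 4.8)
The plan is to establish the result in three stages: first show that the $\mathcal{I}$-optimal value equals $u(\eta^\star)$, then extract structural information about the $\mathcal{I}$-cluster points of $\bm{x}$, and finally use the separation functional $T$ to collapse the cluster set to the single point $\eta^\star$.

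\emph{Step 1: the optimal value is exactly $u(\eta^\star)$.} The constant sequence $\bm{\eta}^\star := (\eta^\star, \eta^\star, \ldots)$ is feasible since $\eta^\star \in \Phi(\eta^\star)$, and it lies in $\mathscr{K}$ trivially, so $\mathcal{I}\text{-}\liminf u(\bm{\eta}^\star) = u(\eta^\star)$. If $\bm{\eta}^\star$ itself lay in $\mathscr{C}$ this would be immediate, but $\mathscr{C}$ is only an arbitrary nonempty subset of $\mathscr{F}_{\mathrm K}$; this is exactly what condition \ref{item:maximizingsequence} is for, giving $\sup_{\bm y \in \mathscr{C}} \mathcal{I}\text{-}\liminf u(\bm y) \ge u(\eta^\star)$. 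Combined with optimality of $\bm x$, we get $\mathcal{I}\text{-}\liminf u(\bm x) \ge u(\eta^\star)$. For the reverse inequality I would invoke condition \ref{item:restriction}: I claim every $\mathcal{I}$-cluster point $\xi$ of $\bm x$ satisfies $u(\xi) \le u(\eta^\star)$. Since $\bm x \in \mathscr{F}_{\mathrm K}$, by the compactness-type results of Section~\ref{sec:preliminaries} the cluster set $\Gamma_{\bm x}(\mathcal{I})$ is nonempty, and using continuity of $\Phi$ and $u$ together with translation invariance of $\mathcal{I}$ one shows $\Gamma_{\bm x}(\mathcal{I})$ is "$\Phi$-invariant" in an appropriate sense — in particular it contains a fixed point of $\Phi$, or more precisely any minimal cluster point behaves like a stationary point. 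This forces $\mathcal{I}\text{-}\liminf u(\bm x) = \min_{\xi \in \Gamma_{\bm x}(\mathcal I)} u(\xi) \le u(\eta^\star)$ via \ref{item:restriction}, hence equality $\mathcal{I}\text{-}\liminf u(\bm x) = u(\eta^\star)$.

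\emph{Step 2: cluster points live in $F$ and are linked by $\Phi$.} From Step~1 the smallest $\mathcal{I}$-cluster point of $u(\bm x)$ is $u(\eta^\star)$, so every $\xi \in \Gamma_{\bm x}(\mathcal I)$ has $u(\xi) \ge u(\eta^\star)$, i.e.\ $\Gamma_{\bm x}(\mathcal I) \subseteq F$. Next I would prove that $\Gamma_{\bm x}(\mathcal I)$ is closed under the correspondence in the weak sense relevant here: for translation-invariant $\mathcal I$, if $\xi \in \Gamma_{\bm x}(\mathcal I)$ then there exists $\xi' \in \Phi(\xi) \cap \Gamma_{\bm x}(\mathcal I)$, and symmetrically there exists $\xi'' $ with $\xi \in \Phi(\xi'')$ and $\xi'' \in \Gamma_{\bm x}(\mathcal I)$. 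The forward direction uses upper hemicontinuity plus compactness of $\Phi(\xi)$ together with the shift $n \mapsto n+1$ preserving membership in $\mathcal I^+$; the backward direction uses lower hemicontinuity similarly. Iterating, one obtains a bi-infinite chain inside $\Gamma_{\bm x}(\mathcal I) \subseteq F$ along which consecutive points are related by $\Phi$.

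\emph{Step 3: the functional $T$ forces collapse.} Apply $T$ along such a chain: condition \ref{item:inequality} says that for $x \in F$ and $y \in \Phi(x)$, $Tx \le Ty$ implies $x = y = \eta^\star$. On the compact set $\Gamma_{\bm x}(\mathcal I)$ the continuous functional $T$ attains a minimum at some $\xi_0$. Taking $\xi'' \in \Gamma_{\bm x}(\mathcal I)$ with $\xi_0 \in \Phi(\xi'')$ (backward chain), minimality of $T\xi_0$ gives $T\xi'' \ge T\xi_0$, i.e.\ $T\xi'' \le T\xi_0$ is false in general — so instead I take the point where $T$ is \emph{maximal} on $\Gamma_{\bm x}(\mathcal I)$, say at $\zeta$, pick $\zeta' \in \Phi(\zeta) \cap \Gamma_{\bm x}(\mathcal I)$, and maximality gives $T\zeta' \le T\zeta$; then \eqref{eq:existenceT} applied with $x = \zeta$, $y = \zeta'$ yields $\zeta = \zeta' = \eta^\star$, so $T$ achieves its maximum over $\Gamma_{\bm x}(\mathcal I)$ precisely at $\eta^\star$. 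Running the same argument at the $T$-minimum $\xi_0$ with a backward link $\xi''$ (so $\xi_0 \in \Phi(\xi'')$, $\xi'' \in \Gamma_{\bm x}(\mathcal I) \subseteq F$, and $T\xi'' \ge T\xi_0$ means $T\xi_0 \le T\xi''$, wrong direction again) — the clean route is: for every $\xi \in \Gamma_{\bm x}(\mathcal I)$ with forward partner $\xi' \in \Phi(\xi) \cap \Gamma_{\bm x}(\mathcal I)$ we have, unless $\xi = \eta^\star$, that $T\xi' > T\xi$ strictly; so $T$ is strictly increasing along forward chains away from $\eta^\star$, which on a compact invariant set is impossible unless the whole set is $\{\eta^\star\}$. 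Hence $\Gamma_{\bm x}(\mathcal I) = \{\eta^\star\}$, and since $\bm x \in \mathscr{K}$ is $\mathcal I$-contained in a compact set with a unique $\mathcal I$-cluster point, it is $\mathcal I$-convergent to that point, i.e.\ $\mathcal{I}\text{-}\lim \bm x = \eta^\star$.

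\emph{Main obstacle.} The delicate point is Step~2: proving that $\Gamma_{\bm x}(\mathcal I)$ is genuinely closed under forward and backward $\Phi$-links. This is where translation invariance of $\mathcal I$ (condition \ref{item:translation}) is essential — one needs that if $A \in \mathcal I^+$ then $(A+1)\cap\mathbf N \in \mathcal I^+$, combined with hemicontinuity of $\Phi$ and a compactness/subnet (or ultrafilter) argument to locate the partner cluster point inside $\Phi(\xi)$. Example~\ref{example:Itralsation} shows this step truly fails for general ideals. The rest is bookkeeping with $\mathcal I$-liminf, the representation of $\mathcal{I}\text{-}\liminf$ as the least cluster point (Corollary~\ref{lem:representationIliminf}), and the standard fact that a relatively-compact sequence with a unique cluster point converges.
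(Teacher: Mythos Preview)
Your overall strategy---show $\Gamma_{\bm x}(\mathcal I)\subseteq F$, prove that $\Gamma_{\bm x}(\mathcal I)$ admits forward and backward $\Phi$-links, then use $T$ to collapse it to $\{\eta^\star\}$---is a genuinely different and more structural route than the paper's, which instead defines $\widehat{T}x:=\max_{y\in\Phi(x)}T(y-x)$, proves it continuous via Berge's theorem, and runs explicit $\varepsilon$--$\delta$ estimates (Claims~\ref{claim:mincluster} and~\ref{claim:maxcluster}). Your approach is viable, but Step~3 as written contains a real logical error, not just imprecision. Condition~\ref{item:inequality} says that for $x\in F$ and $y\in\Phi(x)$, $Tx\le Ty$ forces $x=y=\eta^\star$; equivalently, $T$ \emph{strictly decreases} along forward $\Phi$-links in $F$ away from $(\eta^\star,\eta^\star)$. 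You state the opposite (``$T$ is strictly increasing along forward chains''), and your attempt to apply \eqref{eq:existenceT} at the $T$-maximum $\zeta$ with a \emph{forward} partner $\zeta'\in\Phi(\zeta)\cap\Gamma_{\bm x}(\mathcal I)$ only yields $T\zeta'\le T\zeta$, which is the wrong inequality to trigger \eqref{eq:existenceT}. The correct pairing is: at the $T$-\emph{minimum} $\xi_0\in\Gamma_{\bm x}(\mathcal I)$ take a \emph{forward} partner $\xi_0'\in\Phi(\xi_0)\cap\Gamma_{\bm x}(\mathcal I)$, so minimality gives $T\xi_0\le T\xi_0'$ and \eqref{eq:existenceT} forces $\xi_0=\eta^\star$; at the $T$-\emph{maximum} $\zeta$ take a \emph{backward} partner $\zeta''\in\Gamma_{\bm x}(\mathcal I)$ with $\zeta\in\Phi(\zeta'')$, so maximality gives $T\zeta''\le T\zeta$ and \eqref{eq:existenceT} forces $\zeta=\eta^\star$. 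Then $T$ is constant on $\Gamma_{\bm x}(\mathcal I)$ and a final forward link at any $\xi$ gives $\xi=\eta^\star$.

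Two smaller points. In Step~2 the backward link does \emph{not} come from lower hemicontinuity; it comes from the closed-graph property of $\Phi$ (a consequence of upper hemicontinuity with compact values) together with compactness of the set $K$ carrying $\bm x$: from $\{n:\|x_{n+1}-\xi\|<\varepsilon\}\in\mathcal I^+$ and $x_n\in K$ one extracts a cluster point $\xi''$ of the predecessors, and then $(x_n,x_{n+1})\to(\xi'',\xi)$ with $x_{n+1}\in\Phi(x_n)$ gives $\xi\in\Phi(\xi'')$. In Step~1 the reverse inequality $\mathcal I\text{-}\liminf u(\bm x)\le u(\eta^\star)$ is neither needed nor justified by your sketch (``$\Gamma_{\bm x}(\mathcal I)$ contains a fixed point'' is not established); the paper uses only $\ge$, which already yields $\Gamma_{\bm x}(\mathcal I)\subseteq F$, and that is all Step~3 requires.
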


It is worth to remark that, differently from most of the literature on turnpike theorems, we do not assume neither the concavity of the utility function $u$ nor the convexity of the images $\Phi(x)$ for each $x \in X$. 
Moreover, a sufficient condition to imply condition \ref{item:maximizingsequence} is the existence of a sequence $\bm{y} \in \mathscr{C}$ which is $\mathcal{I}$-convergent to $\eta^\star$, which gives us the following.
\begin{cor}\label{cor:mainthmfixedpoint1}
Let $\langle X, \Phi, u, \mathcal{I}, \mathscr{C}\rangle$ be a system which satisfies conditions \ref{item:varphicontinuous}--\ref{item:inequality} and suppose that there exists $\bm{y} \in \mathscr{C}$ such that $\mathcal{I}\text{-}\lim \bm{y}=\eta^\star$. 
Let also $\bm{x} \in \mathscr{C}$ be an $\mathcal{I}$-optimal path. 
Then $\mathcal{I}\text{-}\lim \bm{x}=\eta^\star$. 
\end{cor}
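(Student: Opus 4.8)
The plan is to obtain Corollary~\ref{cor:mainthmfixedpoint1} as an immediate consequence of Theorem~\ref{thm:mainthmfixedpoint}: the conditions \ref{item:varphicontinuous}--\ref{item:inequality} are assumed outright, so the only thing to check is that the extra hypothesis — the existence of some $\bm{y}\in\mathscr{C}$ with $\mathcal{I}\text{-}\lim\bm{y}=\eta^\star$ — forces condition \ref{item:maximizingsequence} to hold, after which the theorem applies to the system $\langle X,\Phi,u,\mathcal{I},\mathscr{C}\rangle$ verbatim.

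The first step is to observe that $\mathcal{I}$-convergence passes through the continuous map $u$: if $\{n\in\mathbf{N}: y_n\in V\}\in\mathcal{I}^\star$ for every open neighbourhood $V$ of $\eta^\star$, then given an open neighbourhood $W$ of $u(\eta^\star)$ we may choose an open $V\ni\eta^\star$ with $V\subseteq u^{-1}(W)$ by continuity, so that $\{n:u(y_n)\in W\}\supseteq\{n:y_n\in V\}\in\mathcal{I}^\star$; hence $\mathcal{I}\text{-}\lim u(\bm{y})=u(\eta^\star)$. Since $\bm{y}\in\mathscr{C}\subseteq\mathscr{F}_{\mathrm{K}}$ and $u$ is continuous, the real sequence $u(\bm{y})$ is $\mathcal{I}$-bounded, so $\mathcal{I}\text{-}\liminf u(\bm{y})$ is well defined. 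A standard Hausdorff separation argument shows that an $\mathcal{I}$-convergent real sequence has its $\mathcal{I}$-limit as its unique $\mathcal{I}$-cluster point; combining this with Corollary~\ref{lem:representationIliminf}, which represents $\mathcal{I}\text{-}\liminf$ as the smallest $\mathcal{I}$-cluster point, yields $\mathcal{I}\text{-}\liminf u(\bm{y})=u(\eta^\star)$.

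Consequently $\sup_{\bm{z}\in\mathscr{C}}\mathcal{I}\text{-}\liminf u(\bm{z})\ge\mathcal{I}\text{-}\liminf u(\bm{y})=u(\eta^\star)$, which is exactly condition \ref{item:maximizingsequence}. Therefore the system $\langle X,\Phi,u,\mathcal{I},\mathscr{C}\rangle$ satisfies \ref{item:varphicontinuous}--\ref{item:maximizingsequence}, and Theorem~\ref{thm:mainthmfixedpoint} gives $\mathcal{I}\text{-}\lim\bm{x}=\eta^\star$ for the $\mathcal{I}$-optimal path $\bm{x}\in\mathscr{C}$, as claimed.

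There is no genuine obstacle in this argument — it is pure bookkeeping on top of the main theorem. The only point requiring a modicum of care is the well-definedness of $\mathcal{I}\text{-}\liminf u(\bm{y})$, which is precisely why the constraint set is required to sit inside $\mathscr{F}_{\mathrm{K}}$ and why continuity of $u$ is assumed; both are already in force, as noted in the discussion preceding Theorem~\ref{thm:mainthmfixedpoint}.
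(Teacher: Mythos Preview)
Your argument is correct and follows essentially the same route as the paper: verify condition \ref{item:maximizingsequence} from the $\mathcal{I}$-convergence hypothesis and then invoke Theorem~\ref{thm:mainthmfixedpoint}. The paper streamlines the computation slightly by applying Lemma~\ref{lem:basiccluster}\ref{item:cluster3} to get $\Gamma_{\bm{y}}(\mathcal{I})=\{\eta^\star\}$ and then the identity $\mathcal{I}\text{-}\liminf u(\bm{y})=\min_{\eta\in\Gamma_{\bm{y}}(\mathcal{I})}u(\eta)$ from Corollary~\ref{lem:representationIliminf} directly, rather than first pushing the $\mathcal{I}$-limit through $u$; but the content is the same.
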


Corollary \ref{cor:mainthmfixedpoint1} generalizes the main results obtained in \cite{MR3166601, MR1821765}. Indeed, in \cite{MR1821765} Mamedov and Pehlivan assumed, in addition, that: $X$ is the finite dimensional vector space $\mathbf{R}^k$, $F$ is compact, 
there exists a compact set containing (the image of) each feasible sequence in $\mathscr{F}_{\mathrm{K}}$, the set of contraints $\mathscr{C}$ depends on the sequence $\bm{x}$ so that $\mathscr{C}$ is of the type $\{\bm{z} \in \mathscr{F}_{\mathrm{K}}: x_0=z_0\}$, and there exists $\bm{y}\in \mathscr{C}$ which is convergent to $\eta^\star$ (in the place of the weaker assumption of $\mathcal{I}$-convergence). The same hypotheses have been also used by Das et al. in \cite{MR3166601}, where the authors 
considered certain correspondences $\Phi: \mathbf{R}^k \rightrightarrows \mathbf{R}^k$ such that 
$\Phi(x)=\{h(x,y): y \in U\}$ for all $x \in \mathbf{R}^k$, 
where $h: \mathbf{R}^k \to \mathbf{R}^m$ is a continuous function and $U\subseteq \mathbf{R}^m$ is a fixed nonempty compact set (it is routine to show that all correspondences $\Phi$ of this type are continuous). Lastly, Mamedov and Pehlivan \cite{MR1821765} focused on the case $\mathcal{I}=\mathcal{Z}$. 
Hence, Corollary \ref{cor:mainthmfixedpoint1} proves that all these assumptions are not really needed.

%
%
%

In the next example, we show that Corollary \ref{cor:mainthmfixedpoint1} (and, hence, also Theorem \ref{thm:mainthmfixedpoint}) cannot be extended to all the ideals $\mathcal{I}$. 
\begin{example}\label{example:Itralsation}
Let $\mathcal{I}$ be an ideal on $\mathbf{N}$ such that $2\mathbf{N}\subseteq \mathcal{I}$. Note that such ideals exist, e.g., the family of subsets of $\mathbf{N}$ containing finitely many odd integers, or the maximal ideals extending $2\mathbf{N}$. 
Now, let $X=\mathbf{R}$, and define $\Phi(x)=\{-x,\nicefrac{x}{2}\}$ and $u(x)=x^3$ for each $x \in \mathbf{R}$. 
Moreover, set $\mathscr{C}:=\{\bm{y} \in \mathscr{F}_{\mathrm{K}}: y_0=1\}$. 
Then the continuous correspondence $\Phi$ has a unique fixed point, i.e., $\mathrm{Fix}(\Phi)=\{0\}$. 
It is easily seen that the system $\langle X, \Phi, u, \mathcal{I}, \mathscr{C}\rangle$ satisfies conditions \ref{item:varphicontinuous}--\ref{item:restriction}. 
Moreover, also condition \ref{item:inequality} holds: indeed, notice that $F=\{x \in \mathbf{R}: u(x) \ge u(0)\}=[0,\infty)$. 
Then, setting $T(r)=r$ for all $r \in \mathbf{R}$, we obtain that $Ty<Tx$ for all $(x,y) \neq (0,0)$ with $x \in F$ and $y \in \Phi(x)$, i.e., for all $x>0$ and $y \in \{-x,\nicefrac{x}{2}\}$. 

At this point, let $\bm{x}=(x_0,x_1,\ldots)\in \mathscr{C}$ be the sequence defined by $x_n=(-1)^{n}$ for all $n \in \mathbf{N}$. 
Then, $\bm{x} \in \mathscr{C}$ and $u(\bm{x})=\bm{x}$, so that $\mathcal{I}\text{-}\lim u(\bm{x})=1$. 
Since $|u(y_n)|\le 1$ for all $\bm{y} \in \mathscr{C}$ and $n \in \mathbf{N}$, it follows that $\bm{x}$ is $\mathcal{I}$-optimal. 
Also, the sequence $\bm{y}$ defined by $y_n=2^{-n}$ for all $n \in \mathbf{N}$ belongs to $\mathscr{C}$ and it is convergent (in the classical sense) to $0$. 
Hence, all hypotheses of Corollary \ref{cor:mainthmfixedpoint1} hold. However, the sequence $\bm{x}$ is clearly not $\mathcal{I}$-convergent to $0$.
\end{example}

Note that the same construction given in Example \ref{example:Itralsation} does not contradict Corollary \ref{cor:mainthmfixedpoint1} in the case that $\mathcal{I}$ is a translation invariant ideal. Indeed, in such case, $2\mathbf{N} \in \mathcal{I}$ if and only if $2\mathbf{N}+1\in \mathcal{I}$. However, since their union is $\mathbf{N}$, then they are both $\mathcal{I}$-positive sets, so that $\Gamma_{u(\bm{x})}(\mathcal{I})=\{1,-1\}$ and $\mathcal{I}\text{-}\liminf u(\bm{x})=-1$. Therefore $\bm{x}$ would be not $\mathcal{I}$-optimal. Moreover, this provides an example of a system $\langle X, \Phi, u, \mathcal{I}, \mathscr{C}\rangle$ such that $u$ is neither concave nor convex, $\Phi$ is not convex-valued, and $F$ is not compact.

\medskip

Suppose now that $\Phi$ is singleton-valued, that is, there exists a function $\phi: X\to X$ such that $\Phi(x)=\{\phi(x)\}$ for all $x \in X$. Note that the continuity of $\Phi$ in condition \ref{item:varphicontinuous} is equivalent to the continuity of $\phi$, see \cite[Lemma 17.6]{MR2378491}. Here, let us identify $\Phi$ with $\phi$. Notice also that a feasible sequence $\bm{x} \in \mathscr{F}$ is simply an orbit $(x_0, \phi(x_0), \phi^2(x_0), \ldots)$. Hence the constraint set $\mathscr{C}$ can be identified with the set of starting values $C:=\{x \in X: \exists \bm{x} \in \mathscr{C}, x_0=x\}$.  To sum up, the system can be identified with the tuple $\langle X, \phi, u, \mathcal{I}, C\rangle$, and a sequence $\bm{x}$ with $x_0 \in C$ is $\mathcal{I}$-optimal provided that
$$
\textstyle 
\forall y \in C, \quad 
\mathcal{I}\text{-}\liminf_{n} u(\phi^n(x_0)) 
\ge \mathcal{I}\text{-}\liminf_{n} u(\phi^n(y)),
$$
where $\phi^0(x):=x$ for all $x \in X$. With these premises, we have the following corollary.
\begin{cor}\label{cor:function}
Let $\langle X, \phi, u, \mathcal{I}, C\rangle$ be a system which satisfies conditions \ref{item:varphicontinuous}--\ref{item:inequality} and suppose that there exists $y_0 \in C$ such that $\liminf_n u(\phi^n(y_0))\ge u(\eta^\star)$. 
Fix also $x_0 \in X$ such that the orbit $(\phi^n(x_0))$ is $\mathcal{I}$-optimal. 
Then $\mathcal{I}\text{-}\lim_n \phi^n(x_0)=\eta^\star$. 
\end{cor}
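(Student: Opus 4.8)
The plan is to reduce the statement to Theorem~\ref{thm:mainthmfixedpoint} by means of the identification of the singleton-valued system with the tuple $\langle X,\phi,u,\mathcal{I},C\rangle$ recalled above. The first observation is that, since $\Phi=\phi$ is singleton-valued and $\mathscr{C}\subseteq\mathscr{F}$, a feasible path is completely determined by its initial value: if $\bm{z}\in\mathscr{C}$ has $z_0=c$, then necessarily $\bm{z}=(c,\phi(c),\phi^2(c),\dots)$. Hence $C$ really parametrizes $\mathscr{C}$, so that for every $c\in C$ the orbit $(\phi^n(c))_n$ belongs to $\mathscr{C}$; in particular the orbit $\bm{x}=(\phi^n(x_0))_n$, being $\mathcal{I}$-optimal, lies in $\mathscr{C}\subseteq\mathscr{F}_{\mathrm{K}}$. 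Since \ref{item:varphicontinuous}--\ref{item:inequality} are assumed, the only hypothesis of Theorem~\ref{thm:mainthmfixedpoint} left to check is \ref{item:maximizingsequence}.

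For \ref{item:maximizingsequence} I would take $\bm{y}:=(\phi^n(y_0))_n\in\mathscr{C}$ and use the elementary fact that $\mathcal{I}\text{-}\liminf\bm{z}\ge\liminf\bm{z}$ for every real sequence $\bm{z}$: indeed $\mathrm{Fin}\subseteq\mathcal{I}$ yields $\mathcal{I}^+\subseteq\mathrm{Fin}^+$, and the inequality then follows directly from the definition of the $\mathcal{I}$-limit inferior. Applying this with $\bm{z}=u(\bm{y})=(u(\phi^n(y_0)))_n$ — which is legitimate by \ref{item:functioncontinuous} together with $\bm{y}\in\mathscr{F}_{\mathrm{K}}$ — gives
$$
\mathcal{I}\text{-}\liminf u(\bm{y})\;\ge\;\liminf_n u(\phi^n(y_0))\;\ge\;u(\eta^\star),
$$
whence $\sup_{\bm{w}\in\mathscr{C}}\mathcal{I}\text{-}\liminf u(\bm{w})\ge u(\eta^\star)$, i.e. condition \ref{item:maximizingsequence}.

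Now all of \ref{item:varphicontinuous}--\ref{item:maximizingsequence} are in force and $\bm{x}=(\phi^n(x_0))_n\in\mathscr{C}$ is an $\mathcal{I}$-optimal path, so Theorem~\ref{thm:mainthmfixedpoint} applies and gives $\mathcal{I}\text{-}\lim\bm{x}=\eta^\star$, that is $\mathcal{I}\text{-}\lim_n\phi^n(x_0)=\eta^\star$, as wanted.

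I do not expect a genuine obstacle, since the content lies entirely in Theorem~\ref{thm:mainthmfixedpoint}: the argument is only the translation of the singleton-valued framework plus the passage from the hypothesis $\liminf_n u(\phi^n(y_0))\ge u(\eta^\star)$ to condition \ref{item:maximizingsequence}. Note that this hypothesis is weaker than the $\mathcal{I}$-convergence assumed in Corollary~\ref{cor:mainthmfixedpoint1}, which is precisely why one argues from the Theorem rather than from that Corollary; the only mildly delicate points are the bookkeeping that $C$ parametrizes $\mathscr{C}$ and the inequality $\liminf\le\mathcal{I}\text{-}\liminf$.
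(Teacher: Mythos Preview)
Your proof is correct and follows essentially the same line as the paper's: verify condition \ref{item:maximizingsequence} from the hypothesis on $y_0$ and then invoke Theorem~\ref{thm:mainthmfixedpoint}. The only cosmetic difference is that you justify $\mathcal{I}\text{-}\liminf\ge\liminf$ directly from the definition via $\mathcal{I}^+\subseteq\mathrm{Fin}^+$, whereas the paper passes through the cluster-point representation of Corollary~\ref{lem:representationIliminf} and the inclusion $\Gamma_{\bm{z}}(\mathcal{I})\subseteq\Gamma_{\bm{z}}(\mathrm{Fin})$.
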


At this point, one may wonder if the standing assumptions of Theorem \ref{thm:mainthmfixedpoint} together with the $\mathcal{I}$-optimality of the sequence $\bm{x}$ imply the stronger conclusion that $\lim \bm{x}=\eta^\star$, so that, in a sense, it would be not necessary to speak about ideals. 
In the next example we show that this is not the case. 
Indeed there exists a system $\langle X, \Phi, u, \mathcal{I}, \mathscr{C}\rangle$ which satisfies conditions \ref{item:varphicontinuous}--\ref{item:maximizingsequence} and an $\mathcal{I}$-optimal sequence $\bm{x} \in \mathscr{C}$ which is not convergent in the ordinary sense (however, thanks to Theorem \ref{thm:mainthmfixedpoint}, it is $\mathcal{I}$-convergent to $\eta^\star$).
\begin{example}\label{ex:idealconvergentnotconvergent}
Set $X=\mathbf{R}$, $\mathcal{I}=\mathcal{Z}$, $\mathscr{C}=\mathscr{F}_{\mathrm{K}}$, $\Phi(x):=[-2x,-\frac{x}{2}]$, and $u(x)=x$ for all $x \in \mathbf{R}$. It is not difficult to see that conditions \ref{item:varphicontinuous}--\ref{item:maximizingsequence} hold and $\mathrm{Fix}(\Phi)=\{0\}$, cf. Example \ref{example:Itralsation}. 

At this point, let $\bm{x}=(x_0,x_1,\ldots)$ be the sequence such that $x_n=(-1)^nz_n$, where $\bm{z}=(z_0,z_1,\ldots)$ is defined as it follows:
$$
\textstyle 
(\overbrace{1,\nicefrac{1}{2}}^{B_1}, \overbrace{1,\nicefrac{1}{2}, \nicefrac{1}{4},\nicefrac{1}{4},\nicefrac{1}{2}}^{B_2}, \ldots, \overbrace{1,\nicefrac{1}{2},\nicefrac{1}{4},\ldots,\nicefrac{1}{2^{k-1}},\underbrace{\nicefrac{1}{2^k},\nicefrac{1}{2^k},\ldots,\nicefrac{1}{2^k}}_{k! \text{ times }},\nicefrac{1}{2^{k-1}},\nicefrac{1}{2^{k-2}},\ldots,\nicefrac{1}{2}}^{B_k},\ldots ).
$$
Here, for each $k\ge 1$, the block $B_k$ has $2k-1+k!$ terms and its middle part is made by $k!$ consecutive terms equal to $\nicefrac{1}{2^k}$. 

Let us show that $\bm{x}$ is $\mathcal{Z}$-optimal: first of all, using the fact that 
$$
\textstyle 
\sum_{k\le n-1}|B_k| \le \sum_{k\le n-1}3\cdot (k-1)!=o(n!) \text{ as }n\to \infty,
$$
thus $\mathcal{Z}\text{-}\lim \bm{x}=0$, cf. also \cite[Lemma 1]{MR4054777}. In particular, $\mathcal{Z}\text{-}\liminf \bm{x}=\mathcal{Z}\text{-}\liminf u(\bm{x})=0$. Let us suppose for the sake of contradiction that there exists a sequence $\bm{y}\in \mathscr{F}_{\mathrm{K}}$ such that $\kappa:=\mathcal{I}\text{-}\liminf \bm{y}>0$. 
Hence $\kappa$ is a statistical cluster point of $\bm{y}$, see Corollary \ref{lem:representationIliminf} below. It follows that 
$$
\textstyle A:=\{n \in \mathbf{N}: y_n>\nicefrac{\kappa}{2}\} \in \mathcal{I}^+.
$$
However, by construction we have that $y_ny_{n+1}<0$ whenever $y_n\neq 0$. 
Therefore $y_{n+1}<-\nicefrac{\kappa}{4}$ for all $n \in A$. 
Considering that $\mathcal{Z}$ is a translation invariant ideal, it follows that $\bm{y}$ is a bounded sequence such that $\{n \in \mathbf{N}: y_n<-\nicefrac{\kappa}{4}\}\supseteq A+1 \in \mathcal{I}^+$. 
We conclude by Lemma \ref{lem:basiccluster}\ref{item:cluster2} below that the sequence $\bm{y}$ has a negative statistical cluster point, contradicting the standing hypothesis that $\mathcal{I}\text{-}\liminf \bm{y}>0$. 

Hence $\bm{x}$ is $\mathcal{Z}$-optimal. However, since the length of block $B_k$ is odd for each $k\ge 2$, it follows that $\liminf \bm{x}=-1$ and $\limsup \bm{x}=1$. 
\end{example}

With these premises, we give below a practical application of our main result in the context of (correspondences generated by) iterated function systems, a basic tool in fractal geometry, see e.g. \cite{MR3236784}. Additional examples can be found also in \cite{MusaSzuca2021}.
\begin{example}\label{ex:fractal}
Set $X=\mathbf{R}$, let $\mathcal{I}$ be a translation invariant ideal on $\mathbf{N}$, and $u: \mathbf{R}\to \mathbf{R}$ be a strictly increasing continuous function. In addition, let $\{\phi_1,\ldots,\phi_k\}$ be an iterated function system on $\mathbf{R}$, that is, a finite number of contractions on $\mathbf{R}$, and define the correspondence $\Phi: \mathbf{R}\rightrightarrows \mathbf{R}$ by
$$
\forall x \in \mathbf{R}, \quad 
\Phi(x):=\{\phi_1(x),\ldots,\phi_k(x)\}.
$$
Accordingly, let $\mathscr{C}$ 
be an arbitrary subset of bounded feasible sequences such that
\begin{equation}\label{eq:Ccondition}
\forall i=1,\ldots,k, \exists x \in \mathbf{R}, \quad 
(x, \phi_i(x), \phi_i^2(x),\ldots) \in \mathscr{C}. 
\end{equation}
(It is remarkable that there exists a unique nonempty compact set $S\subseteq \mathbf{R}$, called \emph{attractor}, such that $\lim_n H^n(\{x_0\})=S$ for all $x_0 \in \mathbf{R}$, in the Hausdorff metric, where $H$ stands for Hutchinson operator defined by $H(A):=
\bigcup_{x \in A}\Phi(x)$ 
for all $A\subseteq \mathbf{R}$, see \cite{MR625600}.)

For each $i=1,\ldots,k$, let $\eta_i$ be the fixed point of $\phi_i$, hence the restriction of $u$ on $\mathrm{Fix}(\Phi)=\{\eta_1,\ldots,\eta_k\}$ is maximized at the unique point $\eta^\star=\max\{\eta_1,\ldots,\eta_k\}$. In particular, conditions \ref{item:varphicontinuous}--\ref{item:restriction} hold. 
At this point, note that $F=
[\eta^\star, \infty)$ and 
\begin{displaymath}
\begin{split}
\forall i=1,\ldots,k, \forall \eta>\eta_i, \quad 
\phi_i(\eta)-\eta_i \le |\phi_i(\eta)-\phi_i(\eta_i)|<
\eta-\eta_i,
\end{split}
\end{displaymath}
hence $\phi_i(\eta)<\eta$ whenever $\eta>\eta_i$. In particular, $\max \Phi(\eta^\star)=\eta^\star$ and $\max \Phi(\eta)<\eta$ for all $\eta>\eta^\star$. Therefore condition \ref{item:inequality} holds letting $T$ be the identity map. Lastly, let $j$ be an index such that $\eta_{j}=\eta^\star$. Then it follows by \eqref{eq:Ccondition} and the Banach contraction theorem that there exists $x \in \mathbf{R}$ such that $(x, \phi_j(x), \phi_j^2(x), \ldots) \in \mathscr{C}$ and $\lim_n \phi^n_j(x)=\eta^\star$. 

We conclude by Corollary \ref{cor:mainthmfixedpoint1} that, if a sequence $\bm{x}$ in the constraint set $\mathscr{C}$ is $\mathcal{I}$-optimal, then $\mathcal{I}\text{-}\lim \bm{x}=\eta^\star$. 
In particular, in the special case $\mathscr{C}=\mathscr{F}_{\mathrm{K}}$, $u(x)=x$, and $\mathcal{I}=\mathrm{Fin}$, we obtain that: if a bounded feasible sequence $\bm{x}$ maximizes its smallest accumulation point, then it is convergent to the maximal fixed point $\eta^\star$ (this could be obtained, of course, also by a direct method.)
%
\end{example}

As a last motivation for the assumptions given in Theorem \ref{thm:mainthmfixedpoint}, we provide in Section \ref{sec:infinite} an example where our main result holds in an infinite dimensional vector space $X$ (we postpone it because of its length). 
The proofs of our results are given in Section \ref{sec:proofs}.



\section{Preliminaries on $\mathcal{I}$-cluster points}\label{sec:preliminaries}

We collect in the next lemma the basic properties of $\mathcal{I}$-cluster points and $\mathcal{I}$-convergence. These properties hold in greater generality, which we do not require here.
\begin{lem}\label{lem:basiccluster}
Let $\bm{x}$ be a sequence taking values in a metric space $S$ and fix an ideal $\mathcal{I}$. Then 
\begin{enumerate}[label={\rm (\roman{*})}]
\item \label{item:cluster1} $\Gamma_{\bm{x}}(\mathcal{I})$ is closed\textup{;}
\item \label{item:cluster2} $\Gamma_{\bm{x}}(\mathcal{I})\cap K\neq \emptyset$, provided that there exists a compact $K\subseteq S$ such that $\{n \in \mathbf{N}: x_n\in K\}\in \mathcal{I}^+$\textup{;}
\item \label{item:cluster3} $\mathcal{I}\text{-}\lim \bm{x}=\eta$ implies $\Gamma_{\bm{x}}(\mathcal{I})=\{\eta\}$\textup{;}
\item \label{item:cluster4} $\mathcal{I}\text{-}\lim \bm{x}=\eta$ if and only if $\Gamma_{\bm{x}}(\mathcal{I})=\{\eta\}$, provided that there exists a compact $K\subseteq S$ such that $\{n \in \mathbf{N}: x_n \in K\} \in \mathcal{I}^\star$\textup{;}
\item \label{item:cluster5} $\Gamma_{\bm{x}}(\mathcal{I})$ is the smallest closed set $C$ such that $\{n \in \mathbf{N}: x_n \in U\}\in \mathcal{I}^\star$ for all open sets $U\supseteq C$, provided that there exists a compact $K\subseteq S$ such that $\{n \in \mathbf{N}: x_n \in K\} \in \mathcal{I}^\star$\textup{.}
\end{enumerate}
\end{lem}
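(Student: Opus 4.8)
The plan is to prove the five parts in order, since each uses fairly standard reformulations of the definitions, and the later parts lean on the earlier ones.

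For part \ref{item:cluster1}, I would show the complement of $\Gamma_{\bm{x}}(\mathcal{I})$ is open: if $\eta \notin \Gamma_{\bm{x}}(\mathcal{I})$, there is an open neighborhood $U$ of $\eta$ with $\{n : x_n \in U\} \in \mathcal{I}$; then for every $\eta' \in U$, $U$ is also a neighborhood of $\eta'$ witnessing $\eta' \notin \Gamma_{\bm{x}}(\mathcal{I})$, so $U$ is contained in the complement. For part \ref{item:cluster2}, I would argue by contradiction: if $\Gamma_{\bm{x}}(\mathcal{I}) \cap K = \emptyset$, then every point of $K$ has an open neighborhood $U$ with $\{n : x_n \in U\} \in \mathcal{I}$; by compactness extract a finite subcover $U_1, \dots, U_m$ of $K$, and since $\mathcal{I}$ is closed under finite unions, $\{n : x_n \in K\} \subseteq \bigcup_{i \le m}\{n : x_n \in U_i\} \in \mathcal{I}$, contradicting the hypothesis that this set is $\mathcal{I}$-positive. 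Part \ref{item:cluster3} is immediate from the definitions: if $\mathcal{I}\text{-}\lim \bm{x} = \eta$ then $\{n : x_n \in U\} \in \mathcal{I}^\star \subseteq \mathcal{I}^+$ for every neighborhood $U$ of $\eta$, so $\eta \in \Gamma_{\bm{x}}(\mathcal{I})$; conversely, if $\eta' \neq \eta$, pick by Hausdorffness (metric space) disjoint open neighborhoods $U \ni \eta$, $U' \ni \eta'$, so $\{n : x_n \in U'\} \subseteq \{n : x_n \in U\}^c \in \mathcal{I}$, hence $\eta' \notin \Gamma_{\bm{x}}(\mathcal{I})$.

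Part \ref{item:cluster4}: the forward direction is \ref{item:cluster3}. For the converse, assume $\Gamma_{\bm{x}}(\mathcal{I}) = \{\eta\}$ and there is a compact $K$ with $\{n : x_n \in K\} \in \mathcal{I}^\star$; I must show $\{n : x_n \in U\} \in \mathcal{I}^\star$ for each open $U \ni \eta$. Equivalently $\{n : x_n \notin U\} \in \mathcal{I}$. Write $\{n : x_n \notin U\} \subseteq \{n : x_n \notin K\} \cup \{n : x_n \in K \setminus U\}$; the first set is in $\mathcal{I}$, so it suffices to show the second is. Now $K \setminus U$ is compact and disjoint from $\Gamma_{\bm{x}}(\mathcal{I}) = \{\eta\}$, so by the argument in part \ref{item:cluster2} (applied to the compact set $K \setminus U$), we cannot have $\{n : x_n \in K \setminus U\} \in \mathcal{I}^+$; hence it lies in $\mathcal{I}$, as needed. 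Part \ref{item:cluster5}: first, $C := \Gamma_{\bm{x}}(\mathcal{I})$ itself has the stated property — for open $U \supseteq \Gamma_{\bm{x}}(\mathcal{I})$, $K \setminus U$ is compact and misses $\Gamma_{\bm{x}}(\mathcal{I})$, so as above $\{n : x_n \in K \setminus U\} \in \mathcal{I}$ and $\{n : x_n \notin U\} \in \mathcal{I}$, i.e. $\{n : x_n \in U\} \in \mathcal{I}^\star$. Minimality: if $C'$ is closed with $\{n : x_n \in U\} \in \mathcal{I}^\star$ for all open $U \supseteq C'$, and if some $\eta \in \Gamma_{\bm{x}}(\mathcal{I}) \setminus C'$, then $U := S \setminus C'$ is an open neighborhood of $\eta$ with $\{n : x_n \in U\} \in \mathcal{I}$ (since its complement $\{n : x_n \in C'\}$ would need... wait, more carefully: take $U' = S \setminus \{\eta\} \supseteq C'$ open? no) — instead, since $\eta \notin C'$ and $C'$ closed, there is an open $W \supseteq C'$ with $\eta \notin \overline{W}$; then $\{n : x_n \in W\} \in \mathcal{I}^\star$, so $\{n : x_n \in S \setminus \overline{W}\} \in \mathcal{I}$, but $S \setminus \overline{W}$ is an open neighborhood of $\eta$, contradicting $\eta \in \Gamma_{\bm{x}}(\mathcal{I})$. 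Hence $\Gamma_{\bm{x}}(\mathcal{I}) \subseteq C'$.

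The main obstacle, such as it is, is bookkeeping with the compactness hypothesis in parts \ref{item:cluster4} and \ref{item:cluster5}: one has to consistently split $\{n : x_n \notin U\}$ using the $\mathcal{I}^\star$-set $\{n : x_n \in K\}$ and then invoke the finite-subcover argument of part \ref{item:cluster2} on a suitable compact remainder $K \setminus U$ (or $K \setminus \overline{W}$). Once part \ref{item:cluster2} is isolated as a lemma-internal tool, the rest is routine point-set topology; no genuinely hard step arises, which is consistent with the authors' remark that these facts "hold in greater generality."
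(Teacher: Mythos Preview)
Your proof is correct. The paper itself does not give a proof of this lemma at all: it simply cites \cite[Lemma 3.1, Corollary 3.2, Corollary 3.4, and Theorem 4.3]{MR3920799}. So there is no argument in the paper to compare against, and your self-contained treatment is strictly more informative than what the authors provide. The only cosmetic issue is the visible false start in part \ref{item:cluster5} (the abandoned attempt with $U := S \setminus C'$ and $U' := S \setminus \{\eta\}$); the final argument using an open $W \supseteq C'$ with $\eta \notin \overline{W}$, available by regularity of the metric space, is the right one and should simply replace the scratched-out lines.
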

\begin{proof}
See \cite[Lemma 3.1, Corollary 3.2, Corollary 3.4, and Theorem 4.3]{MR3920799}. 
\end{proof}

To the best of authors' knowledge, the following result is the first one of this type, even if some consequences were known, cf. Corollary \ref{lem:representationIliminf} below. 
Informally, it states that, for each sequence contained in a compact, the set of $\mathcal{I}$-cluster points of its continuous image coincides with the continuous image of its $\mathcal{I}$-cluster points. 
\begin{prop}\label{prop:clusterscontinuity}
Let $S,S^\prime$ be metric spaces and let $\mathcal{I}$ be an ideal. Fix also a continuous function $h:S\to S^\prime$ and let $\bm{x}$ be a sequence with values in $S$ such that $\{n \in \mathbf{N}: x_n \in K\} \in \mathcal{I}^\star$ for some compact $K\subseteq S$. 
Then $h(\Gamma_{\bm{x}}(\mathcal{I}))=\Gamma_{h(\bm{x})}(\mathcal{I})$.
\end{prop}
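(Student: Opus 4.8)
The plan is to prove the two inclusions $h(\Gamma_{\bm{x}}(\mathcal{I}))\subseteq\Gamma_{h(\bm{x})}(\mathcal{I})$ and $\Gamma_{h(\bm{x})}(\mathcal{I})\subseteq h(\Gamma_{\bm{x}}(\mathcal{I}))$ separately. The first inclusion is the routine direction: if $\eta\in\Gamma_{\bm{x}}(\mathcal{I})$, I want to show $h(\eta)\in\Gamma_{h(\bm{x})}(\mathcal{I})$, i.e. that $\{n: h(x_n)\in V\}\in\mathcal{I}^+$ for every open neighborhood $V$ of $h(\eta)$. By continuity of $h$, the preimage $h^{-1}(V)$ is an open neighborhood of $\eta$, so $\{n: x_n\in h^{-1}(V)\}\in\mathcal{I}^+$ because $\eta$ is an $\mathcal{I}$-cluster point; but this set is contained in $\{n: h(x_n)\in V\}$, which therefore also lies in $\mathcal{I}^+$ (as $\mathcal{I}$ is closed under subsets, any superset of an $\mathcal{I}^+$ set is $\mathcal{I}^+$). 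This direction uses neither the compactness hypothesis nor the fact that the relevant index sets are in $\mathcal{I}^\star$.

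For the reverse inclusion — which I expect to be the main obstacle — I would argue as follows. First note that, since $h$ is continuous and $K$ is compact, $h(K)$ is compact in $S'$, and $\{n: h(x_n)\in h(K)\}\supseteq\{n: x_n\in K\}\in\mathcal{I}^\star$, so $h(\bm{x})$ is itself $\mathcal{I}$-contained in a compact set; in particular $\Gamma_{h(\bm{x})}(\mathcal{I})$ is nonempty and the hypotheses of Lemma~\ref{lem:basiccluster} apply to $h(\bm{x})$ as well. Now fix $\zeta\in\Gamma_{h(\bm{x})}(\mathcal{I})$; I must produce some $\eta\in\Gamma_{\bm{x}}(\mathcal{I})$ with $h(\eta)=\zeta$. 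Consider the set $A:=\{n: x_n\in K\}\cap\{n: h(x_n)\in V\}$, where $V$ ranges over shrinking open balls around $\zeta$; intersecting an $\mathcal{I}^\star$ set with an $\mathcal{I}^+$ set yields an $\mathcal{I}^+$ set. Actually I prefer the cleaner route: let $K':=K\cap h^{-1}(\overline{B}(\zeta,\varepsilon))$ for $\varepsilon>0$; this is a closed subset of the compact set $K$, hence compact, and $\{n: x_n\in K'\}=\{n: x_n\in K\}\cap\{n: h(x_n)\in\overline{B}(\zeta,\varepsilon)\}$ is in $\mathcal{I}^+$ (it contains $\{n: x_n\in K\}\cap\{n: h(x_n)\in B(\zeta,\varepsilon)\}$, an intersection of an $\mathcal{I}^\star$-set and an $\mathcal{I}^+$-set, which is $\mathcal{I}^+$). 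By Lemma~\ref{lem:basiccluster}\ref{item:cluster2}, $\Gamma_{\bm{x}}(\mathcal{I})\cap K'\neq\emptyset$, so for each $m\geq 1$ there exists $\eta_m\in\Gamma_{\bm{x}}(\mathcal{I})$ with $d(h(\eta_m),\zeta)\leq 1/m$ and $\eta_m\in K$.

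Finally, since the $\eta_m$ all lie in the compact set $K$, some subsequence converges to a point $\eta\in K$; because $\Gamma_{\bm{x}}(\mathcal{I})$ is closed by Lemma~\ref{lem:basiccluster}\ref{item:cluster1}, we get $\eta\in\Gamma_{\bm{x}}(\mathcal{I})$, and by continuity of $h$ we have $h(\eta)=\lim h(\eta_{m_j})=\zeta$. Hence $\zeta=h(\eta)\in h(\Gamma_{\bm{x}}(\mathcal{I}))$, completing the reverse inclusion and the proof. The delicate points to get right are: (a) verifying that $K'$ is genuinely compact and that the associated index set is $\mathcal{I}^+$ (the $\mathcal{I}^\star$-hypothesis on $K$ is exactly what makes the intersection $\mathcal{I}^+$ rather than possibly empty); and (b) the final compactness-plus-closedness extraction of the limit point $\eta$, which is where one really needs $\Gamma_{\bm{x}}(\mathcal{I})$ closed and $K$ compact rather than just closed.
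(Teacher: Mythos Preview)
Your argument is correct, and for the reverse inclusion it is considerably cleaner than the paper's. The paper proves $\Gamma_{h(\bm{x})}(\mathcal{I})\subseteq h(\Gamma_{\bm{x}}(\mathcal{I}))$ by restricting $h$ to $K$, invoking uniform continuity and a modulus of continuity $\omega$, covering the compact sets $G_r:=K\cap h^{-1}(V_r)$ by finitely many balls centred at points of $h^{-1}(\{\nu\})$, choosing for each $t$ one such centre whose ball captures an $\mathcal{I}^+$ set of indices, and finally extracting a convergent subsequence of these centres in $K_0:=K\cap h^{-1}(\{\nu\})$. In effect the paper reproves, by hand, the content of Lemma~\ref{lem:basiccluster}\ref{item:cluster2} for each of the shrinking sets $G_r$.

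Your route bypasses all of this: you apply Lemma~\ref{lem:basiccluster}\ref{item:cluster2} directly to the compact sets $K\cap h^{-1}(\overline{B}(\zeta,1/m))$, obtain $\eta_m\in\Gamma_{\bm{x}}(\mathcal{I})$ with $d'(h(\eta_m),\zeta)\le 1/m$, and then pass to a subsequential limit using compactness of $K$ together with Lemma~\ref{lem:basiccluster}\ref{item:cluster1}. The only points worth making explicit are that the intersection of an $\mathcal{I}^\star$-set with an $\mathcal{I}^+$-set is again $\mathcal{I}^+$ (since $B=(A\cap B)\cup(A^c\cap B)$), and that $K$ is sequentially compact because $S$ is metric. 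Both approaches ultimately rely on the same ingredients, but yours packages them more efficiently by leaning on the lemma already available.
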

\begin{proof} 
First, suppose that $\eta \in \Gamma_{\bm{x}}(\mathcal{I})$. 
Then it follows by the continuity of $h$ that $h(\eta) \in \Gamma_{h(\bm{x})}(\mathcal{I})$: indeed for each open neighborhood $U$ of $h(\eta)$ there exists an open neighborhood $V$ of $\eta$ such that $\{n \in \mathbf{N}: x_n \in V\} \subseteq \{n \in \mathbf{N}: h(x_n) \in U\}$. 
Hence $h(\Gamma_{\bm{x}}(\mathcal{I}))\subseteq \Gamma_{h(\bm{x})}(\mathcal{I})$.

Conversely, suppose that $\nu \in \Gamma_{h(\bm{x})}(\mathcal{I})$. 
Note that $F:=h^{-1}(\{\nu\})$ is closed and that $\{n \in \mathbf{N}: h(x_n) \in H\} \in \mathcal{I}^\star$, where $H:=h(K)$ is compact. 
Since $\nu$ belongs to $H$ then $K_0:=F \cap K$ is a nonempty compact set. 
We claim that there exists $\eta \in K_0$ which is also an $\mathcal{I}$-cluster point of $\bm{x}$. To show this, for each $r>0$, let $V_r$ be the closed ball with center $\nu$ and radius $r$. Moreover, for each $x \in F$, let $U_{x,r}$ be the open ball with center $x$ and radius $r$. 

Since $h$ is continuous and $K$ is compact, then $G_r:=K\cap h^{-1}(V_r)$ is compact and contains $K_0$ for each $r>0$. 
Let $h_0$ be the restriction of $h$ to the compact set $K$, so that  $h_0$ is uniformly continuous. It follows that $h_0$ admits a modulus of continuity, i.e., there exists a function $\omega: [0,\infty] \to [0,\infty]$ such that 
$$
\textstyle 
\lim_{r\to 0}\omega(r)=0 
\,\,\,\,\,\text{ and }\,\,\,\,\, 
\forall a,b \in K, \quad d^\prime(h_0(a),h_0(b)) \le \omega (d(a,b)),
$$
where $d$ and $d^\prime$ represent the metric on $S$ and $S^\prime$, respectively. 
In particular, $\omega$ is finite in a (right) neighrborhood of $0$, let us say $[0,\varepsilon]$. 
Replacing, if necessary, each $\omega(r)$ with $\sup_{q\le r}\omega(q)$, we can assume without loss of generality that $\omega$ is nondecreasing. Lastly, let $\omega^{-1}$ be the generalized inverse of $\omega$, i.e., $\omega^{-1}(r):=\inf\{q: \omega(q)>r\}$ for each $r>0$. 
For each $r>0$, we obtain that 
$
r\le \sup \omega(d(a,b)),
$  
where the supremum is taken with respect to all $a \in F$ and $b \in U_{a,\omega^{-1}(r)}$. 
This implies that
\begin{equation}\label{eq:basicinclusion}
\textstyle 
\forall r>0, \quad 
G_{r}\subseteq \bigcup_{x \in F}U_{x,\, 2\omega^{-1}(r)}.
\end{equation}
However, since $G_r$ is compact, there exists $x_{r,1}, \ldots, x_{r, m_r} \in F$ such that $G_r$ is contained into $\bigcup_{i\le m_r}U_{x_{r,i}, 2\omega^{-1}(r)}$. 
To conclude, for each $r>0$, we have that $A_r:=\{n \in \mathbf{N}: h(x_n) \in V_r\} \in \mathcal{I}^+$, hence it follows by \eqref{eq:basicinclusion} that 
$$
\textstyle 
A_r\setminus I=\{n \in \mathbf{N}: x_n \in G_r\}\subseteq  \bigcup_{i\le m_r}\{n \in \mathbf{N}: x_n \in U_{x_{r,i}, 2\omega^{-1}(r)}\},
$$
where $I:=\{n \in \mathbf{N}: x_n \notin K\} \in \mathcal{I}$. 
Since $A_r\setminus I \in \mathcal{I}^+$ and $\mathcal{I}$ is closed under finite unions, it follows that for each $t \in \mathbf{N}$ there exists $k(t) \in\{1,\ldots,m_{1/t}\}$ such that 
$$
B_t:=\{n \in \mathbf{N}: x_n \in U_{x_{1/t, k(t)}, 2\omega^{-1}(1/t)}\}\in \mathcal{I}^+.
$$ 
Since $(x_{1/t, k(t)}: t \in \mathbf{N})$ is a sequence in the compact set $K_0$, there exists a convergent subsequence with limit, let us say, $\eta \in K_0$. Considering that $\lim_t 2\omega^{-1}(1/t)=0$ and that for every $r>0$ the set $C_r:=\{n \in \mathbf{N}: x_n \in U_{\eta,r}\}$ contains $B_t$ for every $t$ sufficiently large in the latter subsequence, we obtain that $C_r \in \mathcal{I}^+$ for all $r>0$. In other words, $\eta \in \Gamma_{\bm{x}}(\mathcal{I})$ and $h(\eta)=\nu$. 
This shows that $\Gamma_{h(\bm{x})}(\mathcal{I})\subseteq h(\Gamma_{\bm{x}}(\mathcal{I}))$, concluding the proof.
\end{proof}

\begin{cor}\label{lem:representationIliminf}
Let $S$ be a metric space and $\mathcal{I}$ be an ideal. In addition, fix a continuous function $h: S\to \mathbf{R}$ and a sequence $\bm{x}$ in $S$ such that $\{n \in \mathbf{N}: x_n \in K\} \in \mathcal{I}^\star$ for some compact $K\subseteq S$. Then 
\begin{equation}\label{eq:liminfrepresent}
\textstyle \mathcal{I}\text{-}\liminf h(\bm{x})=\min  \Gamma_{h(\bm{x})}(\mathcal{I})=\min_{\eta \in \Gamma_{\bm{x}}(\mathcal{I})}h(\eta).
\end{equation}
and, simmetrically,
\begin{equation}\label{eq:limsuprepresent}
\textstyle \mathcal{I}\text{-}\limsup h(\bm{x})=\max  \Gamma_{h(\bm{x})}(\mathcal{I})=\max_{\eta \in \Gamma_{\bm{x}}(\mathcal{I})}h(\eta).
\end{equation}
\end{cor}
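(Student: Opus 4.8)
The plan is to obtain \eqref{eq:liminfrepresent} by combining Proposition~\ref{prop:clusterscontinuity} with the elementary fact that, for a real sequence which is $\mathcal{I}^\star$-contained in a bounded set, the $\mathcal{I}$-limit inferior is precisely its least $\mathcal{I}$-cluster point; equation \eqref{eq:limsuprepresent} is then deduced by applying \eqref{eq:liminfrepresent} to the continuous function $-h$.

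First I would check that all the quantities in \eqref{eq:liminfrepresent} are meaningful. Since $\{n \in \mathbf{N}: x_n \in K\} \in \mathcal{I}^\star \subseteq \mathcal{I}^+$, Lemma~\ref{lem:basiccluster}\ref{item:cluster2} gives $\Gamma_{\bm{x}}(\mathcal{I}) \neq \emptyset$; it is closed by Lemma~\ref{lem:basiccluster}\ref{item:cluster1} and contained in $K$ (a point $\eta \notin K$ has an open neighborhood $U$ disjoint from the closed set $K$, so $\{n: x_n \in U\} \subseteq \{n: x_n \notin K\} \in \mathcal{I}$), hence $\Gamma_{\bm{x}}(\mathcal{I})$ is compact. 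Therefore $h(\Gamma_{\bm{x}}(\mathcal{I}))$ is a nonempty compact subset of $\mathbf{R}$, and Proposition~\ref{prop:clusterscontinuity} identifies it with $\Gamma_{h(\bm{x})}(\mathcal{I})$; passing to minima gives at once the second equality $\min \Gamma_{h(\bm{x})}(\mathcal{I}) = \min_{\eta \in \Gamma_{\bm{x}}(\mathcal{I})} h(\eta)$ in \eqref{eq:liminfrepresent}. Moreover, choosing $M$ with $|h|\le M$ on $K$, one has $\{n: |h(x_n)| \ge M+1\} \subseteq \{n: x_n \notin K\} \in \mathcal{I}$, so $\mathcal{I}\text{-}\liminf h(\bm{x})$ is well-defined. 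Write $\bm{y} := h(\bm{x})$ and $c := \min \Gamma_{\bm{y}}(\mathcal{I})$.

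It then remains to prove $\mathcal{I}\text{-}\liminf \bm{y} = c$, which I would do by two inequalities. For the inequality $\mathcal{I}\text{-}\liminf \bm{y} \le c$: since $c \in \Gamma_{\bm{y}}(\mathcal{I})$, for every $\varepsilon>0$ the set $\{n: y_n < c+\varepsilon\}$ contains the $\mathcal{I}$-positive set $\{n: y_n \in (c-\varepsilon, c+\varepsilon)\}$, hence is itself $\mathcal{I}$-positive, and the definition of the $\mathcal{I}$-limit inferior forces $\mathcal{I}\text{-}\liminf \bm{y} \le c+\varepsilon$; letting $\varepsilon \to 0$ gives the claim. For the reverse inequality $c \le \mathcal{I}\text{-}\liminf \bm{y}$: fix any $r \in \mathbf{R}$ with $\{n: y_n < r\} \in \mathcal{I}^+$; then $K_r := K \cap h^{-1}((-\infty,r])$ is compact, and $\{n: x_n \in K_r\}$ contains the intersection of the $\mathcal{I}^\star$-set $\{n: x_n \in K\}$ with the $\mathcal{I}^+$-set $\{n: y_n < r\}$, which is again $\mathcal{I}$-positive, so Lemma~\ref{lem:basiccluster}\ref{item:cluster2} produces $\xi \in \Gamma_{\bm{x}}(\mathcal{I}) \cap K_r$; since $h(\xi) \le r$ and $h(\xi) \in \Gamma_{\bm{y}}(\mathcal{I})$ by Proposition~\ref{prop:clusterscontinuity}, we get $c \le h(\xi) \le r$, and taking the infimum over all such $r$ yields $c \le \mathcal{I}\text{-}\liminf \bm{y}$. (Alternatively, this reverse inequality follows from Lemma~\ref{lem:basiccluster}\ref{item:cluster5} applied to $\bm{y}$, since $(c-\varepsilon,\infty) \supseteq \Gamma_{\bm{y}}(\mathcal{I})$ implies $\{n: y_n \le c-\varepsilon\} \in \mathcal{I}$ for every $\varepsilon>0$.) This establishes \eqref{eq:liminfrepresent}; then \eqref{eq:limsuprepresent} follows by applying \eqref{eq:liminfrepresent} to $-h$ and using $\mathcal{I}\text{-}\limsup h(\bm{x}) = -\,\mathcal{I}\text{-}\liminf(-h(\bm{x}))$ together with $\Gamma_{-h(\bm{x})}(\mathcal{I}) = -\Gamma_{h(\bm{x})}(\mathcal{I})$.

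I expect the only mildly delicate point to be the inequality $c \le \mathcal{I}\text{-}\liminf \bm{y}$: one must manufacture, for each admissible threshold $r$, an $\mathcal{I}$-cluster point of $\bm{y}$ that is $\le r$, and the natural device is to pull the relevant index set back into the compact set $K$ and invoke the compactness statement of Lemma~\ref{lem:basiccluster}\ref{item:cluster2} (or the minimality statement of Lemma~\ref{lem:basiccluster}\ref{item:cluster5}). Everything else reduces to routine ideal bookkeeping: supersets of $\mathcal{I}$-positive sets are $\mathcal{I}$-positive, and removing a member of $\mathcal{I}$ from an $\mathcal{I}$-positive set leaves it $\mathcal{I}$-positive.
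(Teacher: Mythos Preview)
Your proof is correct and follows essentially the same structure as the paper's: both obtain the second equality in \eqref{eq:liminfrepresent} directly from Proposition~\ref{prop:clusterscontinuity}, and both treat \eqref{eq:limsuprepresent} by symmetry. The only real difference is that the paper dispatches the first equality $\mathcal{I}\text{-}\liminf h(\bm{x})=\min\Gamma_{h(\bm{x})}(\mathcal{I})$ by citing an external reference (\cite[Corollary~2.3]{MR3955010}, cf.\ \cite[Theorem~1$^\prime$]{MR1416085}), whereas you supply a short self-contained argument using Lemma~\ref{lem:basiccluster}\ref{item:cluster2} and~\ref{item:cluster5}; your route has the virtue of keeping the corollary internal to the paper's own toolkit.
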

\begin{proof}
The first equality of \eqref{eq:liminfrepresent} is a consequence of \cite[Corollary 2.3]{MR3955010}, cf. also \cite[Theorem 1$^\prime$]{MR1416085} for the case $\mathcal{I}=\mathcal{Z}$. 
The second equality of \eqref{eq:liminfrepresent} follows directly by Proposition \ref{prop:clusterscontinuity}. The proof for the case $S=\mathbf{R}^n$ and $\mathcal{I}=\mathcal{Z}$ can be found also in \cite[Lemma 3.1]{MR1772096}.

The proof of \eqref{eq:limsuprepresent} is analogous. 
\end{proof}

\section{Proofs}\label{sec:proofs}

\begin{proof}[Proof of Theorem \ref{thm:mainthmfixedpoint}]
Suppose that $\bm{x}$ is an $\mathcal{I}$-optimal path. 
Since $\bm{x} \in \mathscr{C}\subseteq \mathscr{K}$, there exists a compact set $K\subseteq X$ such that $\{n \in \mathbf{N}: x_n \notin K\} \in \mathcal{I}$.  
It follows by \eqref{eq:definitionoptimal}, Corollary \ref{lem:representationIliminf}, and conditions \ref{item:functioncontinuous} and \ref{item:maximizingsequence}, that
$$
\min_{\eta \in \Gamma_{\bm{x}}(\mathcal{I})} u(\eta) 
=\mathcal{I}\text{-}\liminf u(\bm{x})
\ge \sup_{\bm{y} \in \mathscr{C}}\mathcal{I}\text{-}\liminf u(\bm{y})\ge u(\eta^\star),
$$
hence $\Gamma_{\bm{x}}(\mathcal{I})\subseteq K \cap F$, where we recall that $F$ is the closed set $\{x \in X: u(x) \ge u(\eta^\star)\}$. Since $K$ is compact and $\Gamma_{\bm{x}}(\mathcal{I})$ is closed by Lemma \ref{lem:basiccluster}\ref{item:cluster1}, we obtain that $\Gamma_{\bm{x}}(\mathcal{I})$ is compact. In addition, it is nonempty by 
Lemma \ref{lem:basiccluster}\ref{item:cluster2}, therefore $\Gamma_{\bm{x}}(\mathcal{I})\in \mathcal{K}$. 

Suppose that $F=\{\eta^\star\}$. Since $\Gamma_{\bm{x}}(\mathcal{I})$ is a nonempty subset of $F$, it follows that $\Gamma_{\bm{x}}(\mathcal{I})=\{\eta^\star\}$, hence $\mathcal{I}\text{-}\lim \bm{x}=\eta^\star$ by Lemma \ref{lem:basiccluster}\ref{item:cluster4}. 

Let us suppose hereafter that $|F|\ge 2$, so that the linear operator $T$ in \ref{item:inequality} is nonzero. Replacing $T$ with $T/\|T\|$, we can assume without loss of generality that $\|T\|=1$. 
\begin{claim}\label{lem:graphcontinuous}
The map 
$
\textstyle \mathrm{Gr}(\Phi)\to \mathbf{R}: (x,y) \mapsto T(x-y)
$ 
is continuous.
\end{claim}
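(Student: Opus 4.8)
The plan is to show that the map $(x,y) \mapsto T(x-y)$ on $\mathrm{Gr}(\Phi)$ is continuous by factoring it through continuous pieces. Since $T$ is a continuous linear functional and subtraction $X \times X \to X$ is continuous, the composite $(x,y) \mapsto T(x-y)$ is continuous as a map on all of $X \times X$; hence its restriction to the subspace $\mathrm{Gr}(\Phi) \subseteq X \times X$ is automatically continuous. From this angle the claim is essentially immediate and there is no real obstacle — the only thing worth saying is why $\mathrm{Gr}(\Phi)$ is the natural domain and that continuity of a map is inherited by restriction to any subspace with the subspace topology.

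However, I suspect the author actually wants something slightly more substantive, namely that $\mathrm{Gr}(\Phi)$ is itself a well-behaved (e.g. closed) subset of $X \times X$, which is where condition \ref{item:varphicontinuous} gets used: a correspondence that is upper hemicontinuous with compact (hence closed) values has closed graph, by \cite[Theorem 17.11]{MR2378491} or the closed-graph characterization in \cite[Theorem 17.20]{MR2378491}. So the steps I would carry out are: first, invoke \ref{item:varphicontinuous} to note that $\Phi$ is continuous with values in $\mathcal{K}$, so in particular upper hemicontinuous with closed values, whence $\mathrm{Gr}(\Phi)$ is a closed subset of $X \times X$; second, observe that the map $\psi : X \times X \to \mathbf{R}$, $\psi(x,y) = T(x-y) = Tx - Ty$, is continuous, being a composition of the continuous linear map $(x,y) \mapsto x - y$ with the continuous linear functional $T$; third, conclude that $\psi|_{\mathrm{Gr}(\Phi)}$ is continuous as the restriction of a continuous function.

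The main (and in fact only) subtlety is bookkeeping about which topology $\mathrm{Gr}(\Phi)$ carries: it is understood to be the subspace topology induced from the product topology on $X \times X$, and with that convention restriction of continuous maps is trivially continuous. I do not expect any genuine obstacle here; this claim is a routine preparatory observation whose role is to let later arguments pass from convergence $x_n \to x$, $x_{n+1} = y_n \to y$ with $y \in \Phi(x)$ to convergence $T(x_n - x_{n+1}) \to T(x-y)$. Accordingly I would keep the proof to two or three lines: state that $\psi(x,y) := Tx - Ty$ is continuous on $X \times X$ because $T$ is linear and bounded, and that the claimed map is simply $\psi$ restricted to $\mathrm{Gr}(\Phi)$.
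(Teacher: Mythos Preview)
Your core argument is correct and matches the paper's proof exactly: the map is the restriction to $\mathrm{Gr}(\Phi)$ of the composition $T\circ g$, where $g:X^2\to X$, $(x,y)\mapsto x-y$, is continuous. Your speculation that the author wants closedness of $\mathrm{Gr}(\Phi)$ via condition \ref{item:varphicontinuous} is unfounded---the paper's one-line proof never invokes \ref{item:varphicontinuous} or the closed-graph theorem, since continuity of a restriction requires nothing of the domain beyond the subspace topology.
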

\begin{proof}
The claimed map can be rewritten as the restriction on $\mathrm{Gr}(\Phi)$ of the composition $T\circ g$, where $g$ is the continuous function $g: X^2\to X: (x,y) \mapsto x-y$. 
\end{proof}

Since $T$ is continuous and $\Phi(x) \in \mathcal{K}$ for each $x \in X$, 
the function 
$$
\widehat{T}: X\to \mathbf{R}: x\mapsto \max_{y \in \Phi(x)}T(y-x)
$$
is well defined. Since the maximum is reached, we have $\widehat{T}x<0$ for all $x \in F\setminus\{\eta^\star\}$ by \ref{item:inequality}.

\begin{claim}\label{claim:Thatcontinuous}
$\widehat{T}$ is continuous and $\widehat{T}\eta^\star=0$. 
\end{claim}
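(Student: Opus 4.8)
The plan is to handle the two assertions separately, since they are of quite different nature. The identity $\widehat{T}\eta^\star=0$ is a direct consequence of \ref{item:inequality}. Indeed, since $\eta^\star \in \mathrm{Fix}(\Phi)$ we have $\eta^\star \in \Phi(\eta^\star)$, so the value $T(\eta^\star-\eta^\star)=0$ is among those over which the maximum defining $\widehat{T}\eta^\star$ is taken, whence $\widehat{T}\eta^\star \ge 0$. For the reverse inequality, observe that $\eta^\star \in F$ (as $u(\eta^\star)\ge u(\eta^\star)$), so condition \ref{item:inequality} applies with $x=\eta^\star$: for every $y \in \Phi(\eta^\star)$ the implication $T\eta^\star \le Ty \implies y=\eta^\star$ rules out $T(y-\eta^\star)>0$ and forces $T(y-\eta^\star)=0$ only at $y=\eta^\star$; hence $T(y-\eta^\star)\le 0$ for all $y \in \Phi(\eta^\star)$, giving $\widehat{T}\eta^\star \le 0$. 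Combining the two inequalities yields $\widehat{T}\eta^\star=0$.

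For the continuity of $\widehat{T}$, I would invoke Berge's Maximum Theorem (e.g.\ \cite[Theorem 17.31]{MR2378491}). Its hypotheses are met here: by condition \ref{item:varphicontinuous} the correspondence $\Phi$ is continuous with nonempty compact values, and by Claim \ref{lem:graphcontinuous} the objective $(x,y)\mapsto T(y-x)=-T(x-y)$ is continuous on $\mathrm{Gr}(\Phi)$ (recall that the maximum is attained, so $\widehat{T}$ is well defined). The Maximum Theorem then asserts precisely that the value function $x\mapsto \max_{y\in \Phi(x)}T(y-x)$, which is $\widehat{T}$, is continuous (it also yields that the associated argmax correspondence is upper hemicontinuous with nonempty compact values, though that will not be needed).

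I do not expect a genuine obstacle here, as this is essentially a citation: the only points requiring attention are verifying the hypotheses of the Maximum Theorem — continuity of $\Phi$ as a correspondence, supplied by \ref{item:varphicontinuous}, and continuity of the objective on the graph, supplied by Claim \ref{lem:graphcontinuous} — and noting that we work in a metrizable (normed) space, so the metric version of the theorem applies. Should a self-contained argument be preferred, one can instead prove upper and lower semicontinuity of $\widehat{T}$ by hand: upper semicontinuity from the upper hemicontinuity of $\Phi$ together with compactness of its values (extracting from a maximizing sequence $y_n\in \Phi(x_n)$, with $x_n\to x$, a subsequence converging to a point of $\Phi(x)$, cf.\ \cite[Theorem 17.20]{MR2378491}), and lower semicontinuity from the lower hemicontinuity of $\Phi$ (approximating a maximizer $y\in\Phi(x)$ by points $y_n\in\Phi(x_n)$, cf.\ \cite[Theorem 17.21]{MR2378491}); but the Maximum Theorem packages both halves at once.
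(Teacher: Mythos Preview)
Your proposal is correct and follows essentially the same route as the paper: continuity of $\widehat{T}$ via Berge's Maximum Theorem (using \ref{item:varphicontinuous} and Claim \ref{lem:graphcontinuous}), and $\widehat{T}\eta^\star=0$ from $\eta^\star\in\mathrm{Fix}(\Phi)$ together with condition \ref{item:inequality} applied at $x=\eta^\star$. The only difference is cosmetic: you spell out the two inequalities $\widehat{T}\eta^\star\ge 0$ and $\widehat{T}\eta^\star\le 0$ separately, whereas the paper states the conclusion in one line.
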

\begin{proof}
Since $\Phi$ is a continuous correspondence by \ref{item:varphicontinuous} and the map defined in Claim \ref{lem:graphcontinuous} is continuous, it follows by Berge's maximum theorem \cite[Theorem 17.31]{MR2378491} that $\widehat{T}$ is continuous. 

For the second part, we obtain by \ref{item:inequality} that $T(y-\eta^\star)< 0$ for all $y \in \Phi(\eta^\star)$ with $y\neq \eta^\star$. Since $\eta^\star \in \mathrm{Fix}(\Phi)$, we conclude that $\widehat{T}\eta^\star=T(\eta^\star-\eta^\star)=0$. 
\end{proof}
%

Since $T$ is continuous and $\bm{x} \in \mathscr{C} \subseteq \mathscr{K}$, it follows that there exists $M\in \mathbf{R}$ such that $\{n \in \mathbf{N}: |Tx_n| \ge M\} \in \mathcal{I}$. Hence, the $\mathcal{I}$-limit inferior and the $\mathcal{I}$-limit superior of the real sequence $(Tx_n)$ are well defined.

\begin{claim}\label{claim:mincluster}
Fix $\eta \in \Gamma_{\bm{x}}(\mathcal{I})$ such that $\mathcal{I}\text{-}\liminf_n Tx_n=T\eta$. Then $\eta=\eta^\star$. 
\end{claim}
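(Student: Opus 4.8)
The plan is to argue by contradiction, exploiting the translation invariance of $\mathcal{I}$ together with the strict inequality $\widehat{T}x < 0$ on $F \setminus \{\eta^\star\}$. Suppose $\eta \in \Gamma_{\bm{x}}(\mathcal{I})$ satisfies $\mathcal{I}\text{-}\liminf_n Tx_n = T\eta$ but $\eta \neq \eta^\star$. We already know $\Gamma_{\bm{x}}(\mathcal{I}) \subseteq K \cap F$, so $\eta \in F \setminus \{\eta^\star\}$ and hence $\widehat{T}\eta < 0$ by condition \ref{item:inequality}. The key observation is that $T\eta$ is the \emph{smallest} $\mathcal{I}$-cluster point of the real sequence $(Tx_n)$ — this follows from Corollary \ref{lem:representationIliminf} applied to the continuous function $T$ and the fact that $\Gamma_{(Tx_n)}(\mathcal{I}) = T(\Gamma_{\bm{x}}(\mathcal{I}))$ via Proposition \ref{prop:clusterscontinuity}, so that $\mathcal{I}\text{-}\liminf_n Tx_n = \min_{\nu \in \Gamma_{\bm{x}}(\mathcal{I})} T\nu = T\eta$, meaning $T\nu \ge T\eta$ for every $\nu \in \Gamma_{\bm{x}}(\mathcal{I})$.

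First I would fix $\varepsilon > 0$ small enough that the open ball $U$ of radius $\varepsilon$ around $\eta$ still lies inside $\{x : \widehat{T}x < -\delta\}$ for some $\delta > 0$; this is possible since $\widehat{T}$ is continuous (Claim \ref{claim:Thatcontinuous}) and $\widehat{T}\eta < 0$. Also shrink $\varepsilon$, using continuity of $T$, so that $x \in U$ forces $Tx < T\eta + \delta/2$. Since $\eta$ is an $\mathcal{I}$-cluster point of $\bm{x}$, the set $A := \{n \in \mathbf{N} : x_n \in U\}$ is $\mathcal{I}$-positive. Now for each $n \in A$ we have $x_{n+1} \in \Phi(x_n)$, hence $Tx_{n+1} - Tx_n = T(x_{n+1} - x_n) \le \widehat{T}x_n < -\delta$, which gives $Tx_{n+1} < Tx_n - \delta < T\eta + \delta/2 - \delta = T\eta - \delta/2$. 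Thus $\{n \in \mathbf{N} : Tx_{n+1} < T\eta - \delta/2\} \supseteq A$, i.e. $\{n \in \mathbf{N} : Tx_n < T\eta - \delta/2\} \supseteq A + 1$.

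By condition \ref{item:translation}, $\mathcal{I}$ is translation invariant, so $A + 1 \in \mathcal{I}^+$ (since $A \in \mathcal{I}^+$ and if $A+1$ were in $\mathcal{I}$ then $A = ((A+1) - 1) \cap \mathbf{N} \cup \{\text{at most one point}\}$ would be in $\mathcal{I}$ too — more directly, translation invariance means the $\mathcal{I}$-positive sets are preserved under the shift). The sequence $(Tx_n)$ is $\mathcal{I}$-bounded (we chose $M$ with $\{n : |Tx_n| \ge M\} \in \mathcal{I}$), so we may apply Lemma \ref{lem:basiccluster}\ref{item:cluster2} to the sequence $(Tx_n)$ restricted to the $\mathcal{I}$-positive set $A+1$ and a suitable compact interval: there exists an $\mathcal{I}$-cluster point $\nu$ of $(Tx_n)$ with $\nu \le T\eta - \delta/2 < T\eta$. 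But $\nu \in \Gamma_{(Tx_n)}(\mathcal{I}) = T(\Gamma_{\bm{x}}(\mathcal{I}))$, contradicting the minimality $T\eta = \min_{\nu' \in \Gamma_{\bm{x}}(\mathcal{I})} T\nu'$. Hence $\eta = \eta^\star$. The main obstacle is the bookkeeping around translation invariance — making sure that passing from the $\mathcal{I}$-positive set $A$ to the shifted set $A+1$ genuinely preserves $\mathcal{I}$-positivity and that the resulting cluster point of the shifted real sequence is legitimately a cluster point of the original $(Tx_n)$, which is where Lemma \ref{lem:basiccluster}\ref{item:cluster2} and the boundedness of $(Tx_n)$ do the work.
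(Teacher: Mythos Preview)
Your argument is correct and shares the same skeleton as the paper's proof: assume $\eta\neq\eta^\star$, use continuity of $\widehat{T}$ (Claim~\ref{claim:Thatcontinuous}) to find a ball around $\eta$ on which $\widehat{T}<-\delta$, observe that the index set $A$ of visits to that ball is $\mathcal{I}$-positive, and then invoke translation invariance \ref{item:translation} to conclude that $A+1\in\mathcal{I}^+$ while $Tx_{n+1}<T\eta-\delta/2$ for all $n\in A$.

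The difference lies only in how the final contradiction is extracted. The paper converts the inequality $Ty<T\eta-\varepsilon/2$ back to $X$: using $T\pi_y\ge T\eta$ and $\|T\|=1$ it deduces $\|y-\pi_y\|>\varepsilon/2$ for the nearest cluster point $\pi_y$, so the shifted set $A+1$ avoids the $\varepsilon/2$-neighbourhood of the whole compact set $\Gamma_{\bm{x}}(\mathcal{I})$, contradicting Lemma~\ref{lem:basiccluster}\ref{item:cluster5}. You instead stay on the real line: intersecting $A+1$ with $\{n:|Tx_n|\le M\}\in\mathcal{I}^\star$ and applying Lemma~\ref{lem:basiccluster}\ref{item:cluster2} to the compact interval $[-M,\,T\eta-\delta/2]$ yields an $\mathcal{I}$-cluster point of $(Tx_n)$ strictly below $T\eta$, contradicting $T\eta=\min\Gamma_{(Tx_n)}(\mathcal{I})$ from Corollary~\ref{lem:representationIliminf}. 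Your route is marginally more direct, since it never needs the closest-point projection $\pi_y$ or the neighbourhood characterization of $\Gamma_{\bm{x}}(\mathcal{I})$; the paper's route, on the other hand, makes the geometric picture in $X$ explicit, which it then reuses verbatim in the more involved Claim~\ref{claim:maxcluster}.
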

\begin{proof}
Assume that there exists $\eta_0 \in \Gamma_{\bm{x}}(\mathcal{I})$ different from $\eta^\star$ such that $\mathcal{I}\text{-}\liminf_n Tx_n=T\eta_0$. 
Since $\Gamma_{\bm{x}}(\mathcal{I})\subseteq F$ and $\eta_0\neq \eta^\star$, then $\widehat{T}\eta_0<0$. Since $\widehat{T}$ is continuous by Claim \ref{claim:Thatcontinuous}, there exist $\varepsilon,\delta>0$ such that $\widehat{T}x<-\varepsilon$ whenever $\|x-\eta_0\|<\delta$. 
Moreover, it can be assumed without loss of generality that $\delta<\nicefrac{\varepsilon}{2}$. At this point, fix $x,y \in X$ such that $\|x-\eta_0\|<\delta$ and $y \in \Phi(x)$, and let $\pi_{y}$ be a minimizer of $\|\pi-y\|$ with $\pi\in \Gamma_{\bm{x}}(\mathcal{I})$. Since $\widehat{T}x<-\varepsilon$, we get
$$
\textstyle
Ty<Tx-\varepsilon
=T\eta_0+T(x-\eta_0)-\varepsilon
\le T\eta_0+\|T\| \|x-\eta_0\|-\varepsilon
<T\eta_0-\nicefrac{\varepsilon}{2}.
$$
At the same time, we have 
$$
\textstyle 
Ty=T\pi_y+T(y-\pi_y)\ge T\eta_0-\|y-\pi_y\|,
$$
which implies that $\|y-\pi_y\|>\nicefrac{\varepsilon}{2}$.

To sum up, if $\|x-\eta_0\|<\delta$ then $\|y-\pi_y\|>\nicefrac{\varepsilon}{2}$ for all $y \in \Phi(x)$. 
Since $\eta_0$ is an $\mathcal{I}$-cluster point of $\bm{x}$, we have $A:=\{n \in \mathbf{N}: \|x_n-\eta_0\|<\delta\} \in \mathcal{I}^+$. Thus, since $\mathcal{I}$ is translation invariant by \ref{item:translation}, then also $A+1 \in \mathcal{I}^+$.  However, considering that $x_{n+1} \in \Phi(x_n)$ for all $n \in A$, we obtain by the preceeding part that $\|x_{n+1}-\pi_{x_{n+1}}\|>\nicefrac{\varepsilon}{2}$. To sum up, the open set 
$
U:=\{z\in X: \exists \eta \in \Gamma_{\bm{x}}(\mathcal{I}), \|z-\eta_0\|<\nicefrac{\varepsilon}{2}\}
$ 
contains $\Gamma_{\bm{x}}(\mathcal{I})$ and it has empty intersection with the $\mathcal{I}$-positive set $A+1$. This contradicts Lemma \ref{lem:basiccluster}\ref{item:cluster5}.
\end{proof}

\begin{claim}\label{claim:maxcluster}
Fix $\eta \in \Gamma_{\bm{x}}(\mathcal{I})$ such that $\mathcal{I}\text{-}\limsup_n Tx_n=T\eta$. Then $T\eta=T\eta^\star$. 
\end{claim}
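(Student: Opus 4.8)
The plan is to argue by contradiction. Set $\beta := \mathcal{I}\text{-}\limsup_{n} Tx_n = T\eta$ and write $\Gamma := \Gamma_{\bm{x}}(\mathcal{I})$, which, as established earlier in the proof, is a nonempty compact subset of $F$; recall also that we have normalized $\|T\| = 1$. By Corollary~\ref{lem:representationIliminf}, $\beta = \max_{\zeta \in \Gamma} T\zeta$ and $\mathcal{I}\text{-}\liminf_n Tx_n = \min_{\zeta \in \Gamma} T\zeta$, and the latter equals $T\eta^\star$ by Claim~\ref{claim:mincluster}; in particular $\beta \ge T\eta^\star$, so it suffices to rule out $\beta > T\eta^\star$. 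Assume this, and put $\sigma := (\beta - T\eta^\star)/2 > 0$.

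First I would isolate the part of $\Gamma$ lying high in the $T$-direction. The set $\Gamma_\sigma := \Gamma \cap \{x : Tx \ge T\eta^\star + \sigma\}$ is a nonempty compact subset of $F \setminus \{\eta^\star\}$ --- nonempty since it contains $\eta$, as $T\eta = \beta > T\eta^\star + \sigma$. Because $\widehat{T}$ is continuous by Claim~\ref{claim:Thatcontinuous}, strictly negative on $F \setminus \{\eta^\star\}$, and $\le 0$ on all of $\Gamma \subseteq F$, a routine compactness argument yields constants $c \in (0, \sigma]$ and $\delta \in (0, c/10]$ such that, whenever $\mathrm{dist}(x, \Gamma) < \delta$, one has both $Tx < \beta + c/10$ and $\widehat{T}x < c/4$, and moreover $\widehat{T}x < -c$ as soon as in addition $Tx \ge T\eta^\star + \sigma + \delta$ (the last point because then, $\|T\| = 1$ being used, any point of $\Gamma$ within $\delta$ of $x$ must already lie in $\Gamma_\sigma$). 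Write $\rho := c/10$.

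The heart of the proof then combines Lemma~\ref{lem:basiccluster}\ref{item:cluster5} with the translation invariance of $\mathcal{I}$. Since $\{n : x_n \in K\} \in \mathcal{I}^\star$ for some compact $K$, the open $\delta$-neighbourhood $G$ of $\Gamma$ satisfies $P := \{n : x_n \in G\} \in \mathcal{I}^\star$, while $H := \{n : Tx_n \ge \beta - \rho\}$ is $\mathcal{I}$-positive because $\beta$ is an $\mathcal{I}$-cluster point of the real sequence $(Tx_n)$. I would then show $H \cap (P+1) = \emptyset$: if $n \in H$ had predecessor $n - 1 \in P$, then $x_n \in \Phi(x_{n-1})$ gives $Tx_n - Tx_{n-1} \le \widehat{T}x_{n-1}$, so the bound $\widehat{T}x_{n-1} < c/4$ forces $Tx_{n-1} > (\beta - \rho) - c/4 > T\eta^\star + \sigma + \delta$; hence the ``$\widehat{T} < -c$'' regime applies, $Tx_n < Tx_{n-1} - c$, which together with $Tx_n \ge \beta - \rho$ yields $Tx_{n-1} > \beta - \rho + c$, contradicting $Tx_{n-1} < \beta + \rho$. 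Consequently $H \subseteq \mathbf{N} \setminus (P+1) = \{0\} \cup \big((P^c + 1) \cap \mathbf{N}\big)$; since $P^c \in \mathcal{I}$ and $\mathcal{I}$ is translation invariant by \ref{item:translation}, the right-hand side lies in $\mathcal{I}$, so $H \in \mathcal{I}$ --- contradicting $H \in \mathcal{I}^+$. Therefore $\beta = T\eta^\star$, i.e. $T\eta = T\eta^\star$.

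The main obstacle is exactly this one-step estimate. In Claim~\ref{claim:mincluster} the successor of a point near the minimizing cluster point is driven below $\min_\Gamma T$, hence away from $\Gamma$, which directly contradicts Lemma~\ref{lem:basiccluster}\ref{item:cluster5}; no analogous phenomenon occurs at the top, since a point with $Tx$ close to $\beta$ may have a successor back near $\Gamma$, merely at a lower level, so the naive forward argument fails. The remedy is to run the estimate \emph{backwards} along the path: confining a predecessor $x_{n-1}$ of a high point $x_n$ both near $\Gamma$ and at a $T$-value close to $\beta$ traps $x_{n-1}$ in the region where $\widehat{T} \le -c$, and the forced drop from $x_{n-1}$ to $x_n$ is incompatible with $Tx_n$ being close to $\beta$. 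Two minor points need attention: translation invariance must be applied to $P^c$ rather than to $P$, and the $\mathcal{I}$-positivity of $H$ should be read off from the identification of $\mathcal{I}\text{-}\limsup$ with the largest $\mathcal{I}$-cluster point in Corollary~\ref{lem:representationIliminf}.
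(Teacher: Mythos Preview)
Your proof is correct and follows the same backward--step strategy as the paper: both arguments assume $\mathcal{I}\text{-}\limsup_n Tx_n>T\eta^\star$, look at predecessors of points where this supremum is nearly attained, and combine translation invariance with Lemma~\ref{lem:basiccluster}\ref{item:cluster5} to reach a contradiction.

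The organizational difference is worth noting. The paper works in the norm metric: it fixes the maximizing cluster point $\eta_0$ and splits the analysis according to whether the nearby cluster point $\eta$ satisfies $\|\eta-\eta^\star\|<\delta/2$ or $\eta\in Q:=\{\eta\in\Gamma:\|\eta-\eta^\star\|\ge \delta/2\}$, showing in each case that any successor $y\in\Phi(x)$ of a point $x$ near $\Gamma$ must be norm--far from $\eta_0$; the $\mathcal{I}^+$ set is then $\{n:\|x_{n+1}-\eta_0\|\le\nu\}$. You instead work entirely with the scalar $T$--values: a two--step bootstrap (first $\widehat{T}<c/4$ on the whole $\delta$--neighbourhood of $\Gamma$ to force $Tx_{n-1}>T\eta^\star+\sigma+\delta$, then $\widehat{T}<-c$ near $\Gamma_\sigma$ to push $Tx_{n-1}>\beta+\rho$) yields the contradiction directly as an inequality on $Tx_{n-1}$, and your $\mathcal{I}^+$ set is $H=\{n:Tx_n\ge\beta-\rho\}$. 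Your packaging avoids the explicit case split and does not single out any particular maximizer $\eta_0$, which is a little cleaner; the paper's version makes the geometric picture (successors driven away from $\eta_0$) more visible.
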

\begin{proof}
Assume that there exists $\eta_0 \in \Gamma_{\bm{x}}(\mathcal{I})$ such that  $\mathcal{I}\text{-}\limsup_n Tx_n=T\eta$ and $T\eta\neq T\eta^\star$. 
Since $\eta^\star$ is a minimizer of $T\eta$ with $\eta \in \Gamma_{\bm{x}}(\mathcal{I})$ by Claim \ref{claim:mincluster}, then $\kappa:=T(\eta_0-\eta^\star)>0$. 

Since $\widehat{T}$ is continuous and $\widehat{T}\eta^\star=0$ by Claim \ref{claim:Thatcontinuous}, there exist $\varepsilon, \delta \in (0,\nicefrac{\kappa}{4})$ such that $\widehat{T}x<\varepsilon$ whenever $\|x-\eta^\star\|<\delta$. 
Therefore, for each $y \in \Phi(x)$ such that $\|x-\eta^\star\|<\delta$ we obtain
$$
Ty\le Tx+\varepsilon
=T\eta^\star+T(x-\eta^\star)+\varepsilon 
< T\eta^\star+\delta+\varepsilon
<T\eta^\star+\nicefrac{\kappa}{2}
$$
and, at the same time,
$$
Ty=T\eta_0+T(y-\eta_0)
\ge T\eta^\star+\kappa-\|y-\eta_0\|.
$$
Therefore $\|y-\eta_0\|>\nicefrac{\kappa}{2}$ whenever $\|x-\eta^\star\|<\delta$ and $y \in \Phi(x)$. 
It follows that, if $x\in X$ is chosen such that $\|x-\eta\|<\nicefrac{\delta}{2}$ and $\|\eta-\eta^\star\|<\nicefrac{\delta}{2}$ for some $\eta \in \Gamma_{\bm{x}}(\mathcal{I})$, then $\|x-\eta^\star\|\le \|x-\eta\|+\|\eta-\eta^\star\|<\delta$ and hence $\|y-\eta_0\|<\kappa$ for all $y \in \Phi(x)$.

At this point, note that the set $Q:=\{\eta \in \Gamma_{\bm{x}}(\mathcal{I}): \|\eta-\eta^\star\|\ge \nicefrac{\delta}{2}\}$ is compact. Suppose that $Q\neq \emptyset$. By the continuity of $\widehat{T}$, we have $\max_{\eta \in Q}\widehat{T}\eta<0$. 
It follows there exist $\lambda,\tau>0$ such that $\widehat{T}x<-\lambda$ whenever $\|x-\eta\|<\tau$ for some $\eta\in Q$. In addition, it can be assumed without loss of generality that $\tau<\nicefrac{\lambda}{2}$. 
Now, let us suppose that $x\in X$ is chosen such that $\|x-\eta\|<\tau$ for some fixed $\eta\in Q$. Fix $y \in \Phi(x)$; then 
$$
T\eta_0-\|y-\eta_0\|\le Ty <Tx-\lambda
\le T\eta_x+\|x-\eta\|-\lambda
\le T\eta_0-\nicefrac{\lambda}{2},
$$
so that $\|y-\eta_0\|>\nicefrac{\lambda}{2}$. 

Set $\nu:=\min\{\nicefrac{\delta}{2}, \nicefrac{\kappa}{2}, \tau, \nicefrac{\lambda}{2}\}>0$ and fix $x,y \in X$ with $y \in \Phi(x)$. To sum up the previous observations, we have that:
\begin{enumerate}[label={\rm (\roman{*})}]
\item If there exists $\eta \in \Gamma_{\bm{x}}(\mathcal{I})$ such that $\|x-\eta\|<\nu$ and $\|\eta-\eta^\star\|<\nicefrac{\delta}{2}$ then $\|y-\eta_0\|>\nu$;
\item If there exists $\eta \in \Gamma_{\bm{x}}(\mathcal{I})$ such that $\|x-\eta\|<\nu$ and $\|\eta-\eta^\star\|\ge \nicefrac{\delta}{2}$ (so that $Q\neq \emptyset$) then $\|y-\eta_0\|>\nu$.
\end{enumerate}
Putting everything together, 
if $\|y-\eta_0\|\le \nu$ then $\|x-\eta\|>\nu$ for all $\eta \in \Gamma_{\bm{x}}(\mathcal{I})$. 

We conclude as in the proof of Claim \ref{claim:mincluster}: since $A:=\{n \in \mathbf{N}: \|x_{n+1}-\eta_0\|\le \nu\} \in \mathcal{I}^+$ and $\mathcal{I}$ is translation invariant by \ref{item:translation}, then $A-1\in \mathcal{I}^+$. However, $A-1$ is a subset of $\{n \in \mathbf{N}: \forall \eta \in \Gamma_{\bm{x}}(\mathcal{I}), \|x_n-\eta\|>\nu\}$, which belongs to $\mathcal{I}$ thanks to Lemma \ref{lem:basiccluster}\ref{item:cluster5}. This contradiction concludes the proof. 
\end{proof}

To complete the proof, note that by Corollary \ref{lem:representationIliminf}  there exist nonempty compact sets $\Gamma_{\mathrm{min}}, \Gamma_{\mathrm{max}}\subseteq \Gamma_{\bm{x}}(\mathcal{I})$ such that $\mathcal{I}\text{-}\liminf_n Tx_n=T\eta$ for all $\eta \in \Gamma_{\mathrm{min}}$ and $\mathcal{I}\text{-}\limsup_n Tx_n=T\eta$ for all $\eta \in \Gamma_{\mathrm{max}}$. Hence, Claim \ref{claim:mincluster} and Claim \ref{claim:maxcluster} imply that 
$$
\{\eta^\star\}=\Gamma_{\mathrm{min}}
\quad \text{ and }\quad 
\eta^\star \in \Gamma_{\mathrm{min}}\cap \Gamma_{\mathrm{max}}.
$$ 
In other words, the function $\Gamma_{\bm{x}}(\mathcal{I})\to \mathbf{R}$ defined by $\eta \mapsto T\eta$ has a unique point of minimum which is also a maximizer. Therefore $\Gamma_{\bm{x}}(\mathcal{I})=\{\eta^\star\}$, which is equivalent to $\mathcal{I}\text{-}\lim \bm{x}=\eta^\star$ by Lemma \ref{lem:basiccluster}\ref{item:cluster4}.
\end{proof}

\begin{rmk}\label{rmk:conditionexistence}
As it is evident from the proof of Theorem \ref{thm:mainthmfixedpoint}, the full strenght of condition \ref{item:inequality} has not been used. 
Indeed, we needed it only in Claim \ref{claim:mincluster} to show that $\widehat{T}\eta_0<0$ for some $\eta_0 \in \Gamma_{\bm{x}}(\mathcal{I})$ and in Claim \ref{claim:maxcluster} to show that $\max_{\eta \in Q}\widehat{T}\eta<0$ for a suitable subset $Q\subseteq \Gamma_{\bm{x}}(\mathcal{I})$. 
Therefore, it is enough to replace \eqref{eq:existenceT} with the weaker condition 
$$
\forall x \in \Gamma_{\bm{x}}(\mathcal{I}), \forall y \in \Phi(x), \quad Tx \le Ty \implies x=y=\eta^\star.
$$
However, this condition, differently from \ref{item:inequality}, is depends on a given sequence $\bm{x}$.
\end{rmk}

\begin{proof}[Proof of Corollary \ref{cor:mainthmfixedpoint1}] 
Thanks to Theorem \ref{thm:mainthmfixedpoint}, it is sufficient to show that the existence of a sequence $\bm{y} \in \mathscr{C}$ which is $\mathcal{I}$-convergent to $\eta^\star$ implies condition \ref{item:maximizingsequence}. 
To this aim, observe that $\Gamma_{\bm{y}}(\mathcal{I})=\{\eta^\star\}$ by Lemma \ref{lem:basiccluster}\ref{item:cluster3}. 
It follows by Corollary \ref{lem:representationIliminf} that
$$
\mathcal{I}\text{-}\liminf u(\bm{y})
=\min_{\eta \in \Gamma_{\bm{y}}(\mathcal{I})} u(\eta) 
= u(\eta^\star),
$$
concluding the proof.
\end{proof}

\begin{proof}[Proof of Corollary \ref{cor:function}]
It is enough to note that every $\mathcal{I}$-cluster point is an ordinary accumulation point, so that by Corollary \ref{lem:representationIliminf} we obtain
\begin{displaymath}
\begin{split}
\textstyle 
\sup_{y \in C}\mathcal{I}\text{-}\liminf_n u(\phi^n(y))
&\textstyle \ge \mathcal{I}\text{-}\liminf_n u(\phi^n(y_0))\\
&\textstyle =\min \Gamma_{(u(\phi^n(y_0)))}(\mathcal{I}) \ge 
\liminf_n  u(\phi^n(y_0))\ge u(\eta^\star).
\end{split}
\end{displaymath}
Hence condition \ref{item:maximizingsequence} holds, and the conclusion follows by Theorem \ref{thm:mainthmfixedpoint}.
\end{proof}

%
%
%
%
%


\section{An infinite dimensional example}\label{sec:infinite}

As promised, we provide a practical example where Theorem \ref{thm:mainthmfixedpoint} holds in infinite dimension. 

\begin{example}\label{ex:infinitedimension}
Let $X$ be the Hilbert space $\ell_2$ of square summable real sequences, i.e., sequences $\bm{x}=(x_0,x_1,\ldots)$ such that 
$$
\textstyle 
\|\bm{x}\|:=\sqrt{\sum_{i\ge 0}x_i^2}<\infty.
$$ 
Fix a sequence $\bm{x}^\star \in \ell_2$ and define $\mathscr{C}=\{(\bm{x}^{(n)}) \in \mathscr{F}_{\mathrm{K}}: \bm{x}^{(0)}=\bm{x}^\star\}$. 
Let also $\mathcal{I}$ be an arbitrary translation invariant ideal, and, for each $\bm{x} \in \ell_2$, set $u(\bm{x})=x_0$ and 
\begin{equation}\label{eq:definitionPhi}
\textstyle 
\Phi(\bm{x})=\left\{(-\sum_{i\ge 1}x_i^2,y_1,y_2,\ldots ): 2x_i \le y_i \le x_i+\nicefrac{1}{i} \text{ for all }i\ge 1\right\}\cup \{\frac{1}{2}\bm{x}\},
\end{equation}
where $\frac{1}{2}\bm{x}=(\nicefrac{x_0}{2}, \nicefrac{x_1}{2},\ldots)$. First of all, let us show that $\Phi$ is well defined. To do this, fix $\bm{x} \in \ell_2$ and let us prove that $\Phi(\bm{x})\subseteq \ell_2$. If $\bm{y} \in \Phi(\bm{x})$ is equal to $\frac{1}{2}\bm{x}$ then it clearly belongs to $\ell_2$. Otherwise, since $\ell_2$ is a vector space, it is sufficient to show that $\bm{z}=\bm{y}-\bm{x}=(-x_0-\sum_{i\ge 1}x_i^2,z_1,z_2,\ldots ) \in \ell_2$ where $x_i \le z_i \le \nicefrac{1}{i}$ for all $i\ge 1$. Therefore
\begin{displaymath}
\begin{split}
\textstyle 
\sum_{i\ge 0}z_i^2\le z_0^2+\sum_{i\ge 1}\left(|x_i|+\frac{1}{i}\right)^2 
&\textstyle \le z_0^2+\sum_{i\ge 0}x_i^2+\sum_{i\ge 1}\frac{1}{i^2}+2\sum_{i\ge 1}\frac{|x_i|}{i} \\
&\textstyle \le z_0^2+\sum_{i\ge 0}x_i^2+\sum_{i\ge 1}\frac{1}{i^2}+2\sqrt{\sum_{i\ge 1}x_i^2 \cdot \sum_{i\ge 1}\frac{1}{i^2}}<\infty,
\end{split}
\end{displaymath}
where the last $\le$ follows by the Cauchy--Schwarz inequality. 
In addition, $\Phi(\bm{x})$ is compact. 
To this aim, since $\{\frac{1}{2}\bm{x}\}$ is compact, it is sufficient to show that a translation of the first set in the definition \eqref{eq:definitionPhi} of $\Phi(\bm{x})$ is compact. Let us define 
\begin{equation}\label{eq:definitionvarphi}
\textstyle \varphi(\bm{x}):=\left\{(-x_0-\sum_{i\ge 1}x_i^2,z_1,z_2,\ldots ): x_i \le z_i \le \nicefrac{1}{i} \text{ for all }i\ge 1\right\}.
\end{equation}
Let $\bm{a}$ be the sequence defined by ${a}_0:=-x_0-\sum_{i\ge 1}x_i^2$ and ${a}_i:=|x_i|+\frac{1}{i}$. Note that $\bm{a} \in \ell_2$ and that $\varphi(\bm{x})$ is a closed subset of $\{\bm{z} \in \ell_2: |z_i| \le {a}_i \text{ for all }i\ge 0\}$. However, the latter set is compact thanks to \cite[p. 453]{MR1009162}, hence $\varphi(\bm{x})$ is compact too. 
To sum up, $\Phi(\bm{x})$ is compact subset of $\ell_2$ which is nonempty (since it contains $\frac{1}{2}\bm{x}$). 

At this point, let us show that $\Phi$ is continuous. Reasoning as above, it is sufficient to show that the correspondence $\varphi$ defined in \eqref{eq:definitionvarphi} is continuous at $\bm{x}$. Assume that $\varphi(\bm{x})\neq \emptyset$, i.e., $x_i \le \nicefrac{1}{i}$ for all $i\ge 1$, otherwise the claim is trivial. 

First, let us show that $\varphi$ is upper hemicontinuous. Fix $\varepsilon>0$ and define the open set $U_\varepsilon:=\{\bm{z} \in \ell_2: \exists \bm{y} \in \varphi(\bm{x}), \|\bm{z}-\bm{y}\|<\varepsilon\}$. We need to find a constant $\delta>0$ such that, for each $\bm{x}^\prime \in \ell_2$, if $\|\bm{x}-\bm{x}^\prime\|<\delta$ then $\varphi(\bm{x}^\prime) \subseteq U_\varepsilon$.  Hence, fix also $\bm{x}^\prime \in \ell_2$ such that $\|\bm{x}-\bm{x}^\prime\|<\delta$ for a suitable $\delta>0$ that will be chosen later and pick $\bm{y}^\prime \in \varphi(\bm{x}^\prime)$. 
In particular, $|x_i-x_i^\prime|<\delta$ for all $i \ge 0$. 
Similarly, we can assume without loss of generality that $x_i^\prime\le \nicefrac{1}{i}$ for all $i\ge 1$, otherwise $\varphi(\bm{x}^\prime)=\emptyset$. 
Then
\begin{equation}\label{eq:estimatex0}
\begin{split}
\textstyle 
\forall \bm{y} \in \varphi(\bm{x}), \quad |y_0-y_0^\prime|
&\textstyle \le |x_0-x_0^\prime|+|\sum_{i\ge 1}x_i^2-\sum_{i\ge 1}(x_i^\prime)^2\,|\\
&\textstyle \le |x_0-x_0^\prime|+|x_0^2-(x_0^\prime)^2|+|\|\bm{x}\|^2-\|\bm{x}^\prime\|^2\,| \\
&\textstyle \le \delta+\delta |x_0+x_0^\prime|+\delta(\|\bm{x}\|+\|\bm{x}^\prime\|) \\
&\textstyle \le \delta\left(1+2 (\|\bm{x}\|+\|\bm{x}^\prime\|)\right) \\
&\textstyle \le \delta\left(1+4\|\bm{x}\|+2\delta\right).
\end{split}
\end{equation}
Now, recall that for each integer $i\ge 1$ we have $x_i^\prime \le y_i^\prime \le \nicefrac{1}{i}$ and $x_i \le y_i \le \nicefrac{1}{i}$. In particular, there exists $\bm{y} \in \varphi(\bm{x})$ such that $|y_i-y_i^\prime|\le |x_i-x_i^\prime|$ for all $i\ge 1$. It follows that 
\begin{equation}\label{eq:estimatex1}
\textstyle 
\sum_{i\ge 1}(y_i-y_i^\prime)^2\le \sum_{i\ge 1}(x_i-x_i^\prime)^2 \le \|\bm{x}-\bm{x}^\prime\|^2 \le \delta^2.
\end{equation}
Putting together the above estimates we obtain that, for each given $\bm{y}^\prime \in \varphi(\bm{x}^\prime)$, there exists $\bm{y} \in \varphi(\bm{x})$ such that 
\begin{equation}\label{eq:finalestimate}
\textstyle 
\|\bm{y}-\bm{y}^\prime\|=\sqrt{|y_0-y_0^\prime|^2+\sum_{i\ge 1}(y_i-y_i^\prime)^2}\le \delta \sqrt{\left(1+4\|\bm{x}\|+2\delta\right)^2+1}<\varepsilon,
\end{equation}
where the last inequality holds if $\delta$ is sufficiently small. 

Second, let us show that $\varphi$ is lower hemicontinuous. To this aim, fix an arbitrary $\bm{y} \in \varphi(\bm{x})$ and some $\varepsilon>0$. We claim that there exists $\delta>0$ such that if $\|\bm{x}-\bm{x}^\prime\|<\delta$ then $\|\bm{y}-\bm{y}^\prime\|<\varepsilon$ for some $\bm{y}^\prime \in \varphi(\bm{x}^\prime)$. Note that estimates \eqref{eq:estimatex0} and \eqref{eq:estimatex1} hold simmetrically also in this case, with the conclusion that we have exactly the same upper bound computed in \eqref{eq:finalestimate} for $\|\bm{y}-\bm{y}^\prime\|$. 

%
Therefore $\Phi$ is a nonempty compact-valued continuous correspondence on $\ell_2$, $\mathcal{I}$ is translation invariant, and $u$ is clearly continuous. Hence conditions \ref{item:varphicontinuous}--\ref{item:translation} hold. 

The set of fixed points of $\Phi$ is neither convex nor compact and it is equal to
$$
\textstyle 
\mathrm{Fix}(\Phi)=\{\bm{x} \in \ell_2: x_0=-\sum_{i\ge 1}x_i^2 \text{ and }x_j\le 0 \text{ for all }j\ge 1\}
$$
It follows that the restriction of $u$ on $\mathrm{Fix}(\Phi)$ has a unique maximizer, which is the zero sequence $\bm{0}$ of $\ell_2$, hence condition \ref{item:restriction} holds. 
Moreover, we have that 
$$
F=\{\bm{x} \in \mathscr{C}: u(\bm{x}) \ge u(\bm{0})\}=\{\bm{x} \in \mathscr{C}: x_0 \ge 0\}.
$$
Fix sequences $\bm{x} \in F$ and $\bm{y} \in \Phi(\bm{x})$ such that $(\bm{x}, \bm{y}) \neq (\bm{0}, \bm{0})$. Note that $\Phi(\bm{0})=\{\bm{0}\}$, hence $\bm{x}\neq \bm{0}$ (in particular, if $x_0=0$, then $x_i\neq 0$ for some $i\ge 1$). Thus, setting $T=u$ (which is a continuous linear functional on $\ell_2$), we obtain that $T\bm{y}=-\sum_{i\ge 1}x_i^2<x_0=T\bm{x}$, i.e., condition \ref{item:inequality} holds. 
Lastly, note that the sequence $(\bm{x}^\star, \frac{1}{2}\bm{x}^\star, \frac{1}{4}\bm{x}^\star, \ldots)$ is convergent to $\bm{0}$ and belongs to $\mathscr{C}$ (indeed, it is starts at $\bm{x}^\star$, it is feasible, and its image is contained in the compact set $\{k\bm{x}^\star: k \in [0,1]\}$). 

We conclude by Corollary \ref{cor:mainthmfixedpoint1} that each $\mathcal{I}$-optimal sequence $(\bm{x}^{(n)}) \in \mathscr{C}$ in the system $\langle\, \ell_2, \Phi, u, \mathcal{I}, \mathscr{C}\rangle$ is necessarily $\mathcal{I}$-convergent to $\bm{0}$. 
\end{example}


\subsection{Acknowledgements.} 
The authors are grateful to Marek Balcerzak (Lodz University of Technology, PL) and Piotr Szuca (University of Gda\'{n}sk, PL) for useful discussions and for letting us know about the manuscript \cite{MusaSzuca2021}. P.L. is grateful to PRIN 2017 (grant 2017CY2NCA) for financial support.

\bibliographystyle{amsplain}
\bibliography{turnpike}

\end{document}